\documentclass[10pt,twoside,leqno]{amsart}
\pdfoutput=1
\usepackage{amsfonts}
\usepackage{amsmath}
\usepackage{mathtools}
\usepackage{amsthm}
\usepackage{amssymb}
\usepackage{pifont}

\usepackage{mathrsfs}
\usepackage[numbers]{natbib}
\usepackage[fit]{truncate}
\usepackage{geometry}
\usepackage{hyperref}
\usepackage{bm}
\usepackage{tikz-cd}
\usepackage[makeroom]{cancel}
\usepackage{xcolor,colortbl}
\usepackage{hhline}
\usepackage{float}
\usepackage{enumitem} 
\usepackage{stackengine}

\usepackage{array, multirow}

\newcommand{\R}{\mathbb{R}}

\newcommand{\cm}[1]{\ignorespaces}
\newcommand{\lie}{\mathfrak{l}}

\theoremstyle{plain}
\newtheorem{theorem}{Theorem}[section]
\newtheorem{corollary}[theorem]{Corollary}
\newtheorem{lemma}[theorem]{Lemma}

\newtheorem{proposition}[theorem]{Proposition}

\theoremstyle{definition}

\newtheorem{example}[theorem]{Example}

\newtheorem{remark}[theorem]{Remark}
\numberwithin{equation}{section}

\textheight=8.5in
\textwidth=6in
\oddsidemargin=0.25in
\evensidemargin=0.25in

\begin{document}

\title[Locally conformally balanced metrics on almost abelian Lie algebras]{Locally conformally balanced metrics \\ on almost abelian Lie algebras} 
\author{Fabio Paradiso}
\address{Dipartimento di Matematica ``G. Peano''\\
	Universit\`a di Torino\\
	Via Carlo Alberto 10\\
	10123 Torino, Italy} \email{fabio.paradiso@unito.it}

\subjclass[2010]{53C15, 53C30, 53C55}
\keywords{Almost abelian Lie algebras, Hermitian metrics, Locally conformally balanced, Locally conformally K\"ahler, Hyperk\"ahler, Locally conformally hyperk\"ahler}

\begin{abstract}
We study locally conformally balanced metrics on almost abelian Lie algebras, namely solvable Lie algebras admitting an abelian ideal of codimension one, providing characterizations in every dimension. Moreover, we classify six-dimensional almost abelian Lie algebras admitting locally conformally balanced metrics and study some compatibility results between different types of special Hermitian metrics on almost abelian Lie groups and their compact quotients. We end by classifying almost abelian Lie algebras admitting locally conformally hyperk\"ahler structures.
\end{abstract}

\maketitle

\section{Introduction}
Let $(M,J)$ be a complex manifold of real dimension $2n$, $n \geq 2$, equipped with a Hermitian metric $g$ with associated fundamental $2$-form $\omega=g(J\cdot,\cdot)$. Its \emph{Lee form}, defined by $\theta=-d^*\omega \circ J$, is the unique $1$-form satisfying $d\omega^{n-1}=\theta \wedge \omega^{n-1}$.

A fundamental class of Hermitian metrics is provided by \emph{K\"ahler} metrics, satisfying $d\omega=0$. In literature, many generalizations of the K\"ahler condition have been introduced: two of them are the \emph{balanced} (or \emph{semi-K\"ahler}) condition, characterized by $d^*\omega=0$ (or equivalently $\theta=0$ or $d\omega^{n-1}=0$) and the \emph{locally conformally K\"ahler} (LCK) condition, namely $(M,J)$ admits an open cover $\{U_i\}$ and smooth maps $f_i \in C^{\infty}(U_i)$ such that $e^{-f_i}g\rvert_{U_i}$ is a K\"ahler metric on $(U_i,J\rvert_{U_i})$, where $g$ denotes the LCK metric.
The LCK condition is equivalently characterized by 
the conditions $d\omega=\frac{1}{n-1} \theta \wedge \omega$, $d\theta=0$. If $\theta$ is parallel with respect to the Levi-Civita connection, the LCK metric is called \emph{Vaisman}. For general results about LCK metrics, we refer the reader to \cite{DO, Orn, OV, AO1}.

A further weakening of both the balanced and the LCK conditions is given by the \emph{locally conformally balanced} (LCB) condition, whose definition is analogous to the one for the LCK condition and which is equivalently defined by $d\theta=0$. LCB metrics have been studied, for instance, in \cite{AU, AU1, FT1, LY, OOS, Oti, Shi, Shi1, Yang}. When $n=2$, balanced metrics are K\"ahler and LCB metrics are LCK. 

Recall also that a Hermitian metric is called \emph{strong K\"ahler with torsion} (SKT, also known as \emph{pluriclosed}) if $\partial \overline\partial \omega=0$ or, equivalently, if the torsion of the associated \emph{Bismut connection} vanishes. The Bismut connection $\nabla^B$ of a Hermitian manifold $(M,J,g)$ is the unique linear connection on $M$ having totally skew-symmetric torsion and satisfying $\nabla^Bg=0$, $\nabla^BJ=0$ (see \cite{Bismut,Gau}). Its associated \emph{Bismut-Ricci form} $\rho^B$ is the $2$-form locally defined by
\[
\rho^B(X,Y)=-\frac{1}{2} \sum_{i=1}^{2n} g(R^B(X,Y)f_i,Jf_i),\quad X,Y \in \Gamma_{\text{loc}}(TM),
\]
where $\{f_1,\ldots,f_{2n}\}$ is a local $g$-orthonormal frame and $R^{B}(X,Y)=[\nabla^B_X,\nabla^B_Y]-\nabla^B_{[X,Y]}$ denotes the curvature of $\nabla^B$.

A \emph{hypercomplex structure} on a smooth $4m$-dimensional manifold $M$ is given by a triple of (integrable) complex structures $(I_1,I_2,I_3)$ satisfying $I_1I_2I_3=-\text{Id}_{TM}$. A Riemannian metric on $M$ is called \emph{(locally conformally) hyperk\"ahler} (LCHK) if it is (locally conformally) K\"ahler with respect to the three complex structures and the three induced Lee forms coincide. Hypercomplex and hyperk\"ahler structures on Lie groups where studied for instance in \cite{Bar}, where four-dimensional Lie groups admitting left-invariant hypercomplex structures are classified, and \cite{BDF}, where, in particular, it is shown that left-invariant hyperk\"ahler metrics on Lie groups are flat.

We are interested in the case where $M$ is a simply connected almost abelian Lie group $G$ or a compact almost abelian \emph{solvmanifold}, namely a quotient $\Gamma \backslash G$, with $G$ a simply connected almost abelian Lie group and $\Gamma$ a lattice of $G$, i.e., a discrete subgroup of $G$. A connected (solvable) Lie group $G$ is called \emph{almost abelian} if it admits an abelian normal subgroup of codimension one, or equivalently if the Lie algebra $\mathfrak{g}$ of $G$ admits an abelian ideal $\mathfrak{n}$ of codimension one, so that $\mathfrak{g}$ is isomorphic to the semi-direct product $\R^k \rtimes_D \R$ for some $D \in \mathfrak{gl}_k$. If $\mathfrak{g}$ is non-nilpotent, such an ideal is unique and coincides with the nilradical of $\mathfrak{g}$.

A left-invariant Hermitian structure $(J,g)$ on $G$ or $\Gamma \backslash G$ descends to a structure on the Lie algebra $\mathfrak{g}$ of $G$, so that one can speak of Hermitian structures on $\mathfrak{g}$.
When $\mathfrak{g}$ is almost abelian of real dimension $2n$, as shown in \cite{LV}, these can be fully characterized in terms of the matrix associated with $\text{ad}_{e_{2n}}\rvert_{\mathfrak{n}}$ with respect to some fixed unitary basis $\left\{ e_1,\ldots, e_{2n} \right\}$ \emph{adapted} to the splitting $\mathfrak{g}= J\mathfrak{k} \oplus \mathfrak{n}_1 \oplus \mathfrak{k}$, where $\mathfrak{k} \coloneqq \mathfrak{n}^{\perp_g}$ and $\mathfrak{n}_1 \coloneqq \mathfrak{n} \cap J\mathfrak{n}$, and such that $Je_i = e_{2n+1-i}$, $i=1,\ldots,n$.

K\"ahler, SKT, balanced and LCK almost abelian Lie algebras were studied in terms of the data $(a,v,A)$ in \cite{LV,AL,FP,FP1,AO}. Six-dimensional almost abelian Lie algebras admitting SKT structures were classified in \cite{FP}, and in \cite{FS} the result was extended to a wider class of two-step solvable Lie algebras. For the classification of six-dimensional almost abelian Lie algebras carrying balanced structures, see \cite{FP1}.

In Section \ref{sec_lcb} we characterize LCB almost abelian Lie algebras in terms of the aforementioned algebraic data and in terms of the behaviour of the associated Bismut-Ricci form. 

In the following section, we classify six-dimensional almost abelian Lie algebras admitting LCK structures and those admitting LCB structures, building on the classification of six-dimensional almost abelian Lie algebras admitting complex structures in \cite{FP}, and remark which of the corresponding Lie groups admit compact quotients by lattices. 

In \cite{OOS}, the authors investigate the existence of two different types of special Hermitian metrics on a fixed compact complex nilmanifold (namely, the quotient of a simply connected nilpotent Lie group by a lattice): in Section \ref{sec_comp}, we consider analogous questions for almost abelian solvmanifolds, highlighting similarities and differences with respect to the nilpotent setting.

Finally, in Section \ref{LCHK} we study LCHK structures on almost abelian Lie algebras, giving a classification result in every dimension.

\smallskip
\emph{Acknowledgements}. The author would like to thank Anna Fino for suggesting the subject of this paper and for many useful comments and discussions. The author is also grateful to an anonymous referee for useful comments. The author was supported by GNSAGA of INdAM.

\section{Locally conformally  balanced metrics} \label{sec_lcb}

Let $\mathfrak{g}$ be a $2n$-dimensional almost abelian Lie algebra with a fixed abelian ideal $\mathfrak{n}$ of codimension one. Assume $(J,g)$ is a Hermitian structure on $\mathfrak{g}$ and denote by $\mathfrak{n}_1 \coloneqq \mathfrak{n} \cap J\mathfrak{n}$ the maximal $J$-invariant subspace of $\mathfrak{n}$, which does not depend on the metric $g$.
Then, as shown in \cite{LV}, with respect to a unitary basis $\{e_1,\ldots,e_{2n}\}$ for $\mathfrak{g}$ such that $\mathfrak{n}=\text{span}\left<e_1,\ldots,e_{2n-1}\right>$, $\mathfrak{n}_1=\text{span}\left<e_2,\ldots,e_{2n-1}\right>$, $Je_i=e_{2n+1-i}$, $i=1,\ldots,n$, the matrix $B$ associated with $\text{ad}_{e_{2n}}\rvert_{\mathfrak{n}}$ is of the form
\begin{equation} \label{B}
	B=\begin{pmatrix} a & 0 \\ v & A \end{pmatrix}, \quad a \in \R,\,v \in \mathfrak{n}_1,\, A \in \mathfrak{gl}(\mathfrak{n}_1,J_1),
\end{equation}
where $J_1 \coloneqq J \rvert_{\mathfrak{n}_1}$ and $\mathfrak{gl}(\mathfrak{n}_1,J_1)$ denotes endomorphisms of $\mathfrak{n}_1$ commuting with $J_1$. We denote $\mathfrak{k}\coloneqq \mathfrak{n}^{\perp_g}=\R e_{2n}$ and we say that the basis $\{e_1,\ldots,e_{2n}\}$ is \emph{adapted} to the splitting $\mathfrak{g} = J\mathfrak{k} \oplus \mathfrak{n}_1 \oplus \mathfrak{k}$.
The algebraic data $(a,v,A)$ fully characterizes the Hermitian structure $(J,g)$ and we we shall often denote the resulting Hermitian almost abelian Lie algebra by $(\mathfrak{g}(a,v,A),J,g)$.

Before studying the LCB condition, we recall the known characterizations for special Hermitian almost abelian Lie algebras.

\begin{proposition}
	\label{Herm-alm-ab}
A Hermitian almost abelian Lie algebra $(\mathfrak{g}(a,v,A),J,g)$ is
\begin{itemize}
	\item \makebox[1.35cm]{K\"ahler,\hfill} if $v=0$, $A \in \mathfrak{u}(\mathfrak{n}_1,J_1,g)$ (see \cite{LV}),
	\item {\makebox[1.35cm]{LCK,\hfill} if $v=0$, $A\in \R\text{\normalfont Id}_{\mathfrak{n}_1}\oplus \mathfrak{u}(\mathfrak{n}_1,J_1,g)$ or $n=2$, $A=0$ (see \cite{AO}),}
	\item {\makebox[1.35cm]{balanced,\hfill} if $v=0$, $\operatorname{tr} A=0$ (see \cite{FP1}),}
	\item {\makebox[1.35cm]{SKT,\hfill} if $[A,A^t]=0$ and the eigenvalues of $A$ have real part $-\frac{a}{2}$ or $0$ (see \cite{AL}),}
\end{itemize}
where $\mathfrak{u}(\mathfrak{n}_1,J_1,g)=\mathfrak{so}(\mathfrak{n}_1,g) \cap \mathfrak{gl}(\mathfrak{n}_1,J_1)$.
\end{proposition}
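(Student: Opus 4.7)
My plan is to derive all four characterizations from the Chevalley--Eilenberg formalism applied to the adapted basis $\{e_1,\ldots,e_{2n}\}$. With the dual basis $\{e^1,\ldots,e^{2n}\}$, the only nontrivial brackets $[e_{2n},e_i]=Be_i$ for $i<2n$ translate into $de^j=-\sum_i B_{ji}\,e^{2n}\wedge e^i$ for $j<2n$ and $de^{2n}=0$. The fundamental form reads $\omega=\sum_{i=1}^n e^i\wedge e^{2n+1-i}$, and since every $de^j$ contains the factor $e^{2n}$, the Lee form comes out immediately as $\theta=-(\operatorname{tr}B)\,e^{2n}=-(a+\operatorname{tr}A)\,e^{2n}$, which is automatically closed. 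This single computation underpins all four cases.

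For the \emph{K\"ahler} case I would expand $d\omega$ using the block form $B=\begin{pmatrix}a&0\\v&A\end{pmatrix}$: the coefficients of the mixed forms $e^{2n}\wedge e^1\wedge e^i$ involve precisely the entries of $v$, forcing $v=0$, while the purely $\mathfrak{n}_1$-part of $d\omega$ involves the $J_1$-real part of $A$ with respect to $g$ and vanishes exactly when $A\in\mathfrak{u}(\mathfrak{n}_1,J_1,g)$. The \emph{balanced} case follows either from $\theta=0$ after showing separately that $d\omega^{n-1}=0$ forces $v=0$, or, more efficiently, from the identity $d^*\omega=-J\theta$ together with a direct computation of $d\omega^{n-1}$ via the Lefschetz action of $\omega$ on $\Lambda^*\mathfrak{g}^*$.

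For the \emph{LCK} condition, since $d\theta=0$ is automatic, everything reduces to the pointwise equation $d\omega=\frac{1}{n-1}\theta\wedge\omega$. Splitting this equation along the decomposition $\mathfrak{g}=J\mathfrak{k}\oplus\mathfrak{n}_1\oplus\mathfrak{k}$, one term yields $v=0$, a trace relation forces $\operatorname{tr}A=na$, and the remaining relation on $\mathfrak{n}_1$ requires the $g$-symmetric part of $A$ to be scalar; together these give $A\in\R\,\text{Id}_{\mathfrak{n}_1}\oplus\mathfrak{u}(\mathfrak{n}_1,J_1,g)$. In dimension $n=2$ the space $\mathfrak{n}_1$ is two-dimensional and the equation degenerates, leaving the extra solution $A=0$ that must be treated separately.

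For the \emph{SKT} case I would work with the Bismut torsion $c=-Jd\omega=d^c\omega$ and the identity $\partial\bar\partial\omega=0\iff dd^c\omega=0$. Computing $d^c\omega$ in the adapted basis and differentiating again produces a $4$-form whose coefficients involve the symmetric combinations of $A$ and $A^t$; vanishing forces $[A,A^t]=0$, and a spectral analysis of the resulting $2$-form (using a unitary refinement of the basis in which $A$ is upper triangular) translates the remaining condition into the stated constraint on the real parts of the eigenvalues. The SKT computation is the main obstacle, since it is the only one requiring two derivatives of $\omega$ and some care is needed to handle the block structure of $A$ when diagonalizing; the second subtlety is the $n=2$ branch of the LCK case, which must be isolated because of the $\frac{1}{n-1}$ coefficient.
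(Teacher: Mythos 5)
Your central computation --- the claim that, because every $de^j$ contains a factor of $e^{2n}$, the Lee form is $\theta=-(\operatorname{tr}B)\,e^{2n}=-(a+\operatorname{tr}A)\,e^{2n}$ --- is incorrect, and since you say this single computation underpins all four cases, the error is structural. The implication does not follow: $d\omega^{n-1}$ is indeed a sum of $(2n-1)$-forms each containing $e^{2n}$, but so is $\alpha\wedge\omega^{n-1}$ for any $\alpha\in\mathfrak{n}_1^*$ (already in dimension four, $e^2\wedge\omega=-e^{124}$ contains $e^4$), so the observation only forces the $e^1$-component of $\theta$ to vanish, not the $\mathfrak{n}_1$-components. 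The correct formula, recorded as \eqref{leeform} in the paper, is $\theta=(Jv)^{\flat}-(\operatorname{tr}A)\,e^{2n}$: there is a nontrivial $\mathfrak{n}_1$-component whenever $v\neq0$, and the $e^{2n}$-coefficient is $-\operatorname{tr}A$, not $-(a+\operatorname{tr}A)$. Indeed $a$ never enters $d\omega$ at all, since $de^1=a\,e^1\wedge e^{2n}$ gives $de^1\wedge e^{2n}=0$; a direct check in dimension four yields $d\omega=v_3\,e^{124}-v_2\,e^{134}-(\operatorname{tr}A)\,e^{234}$, matching \eqref{leeform} and not your formula.

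This error propagates into at least two of the four items. For the balanced case, your route via $\theta=0$ would produce the condition $a+\operatorname{tr}A=0$ instead of the stated $\operatorname{tr}A=0$, and it hides the fact that $\theta=0$ alone already forces $v=0$ through the $(Jv)^{\flat}$ term. For the LCK case, your intermediate relation $\operatorname{tr}A=na$ cannot be right, since $a$ appears nowhere in the characterization $v=0$, $A\in\R\,\text{Id}_{\mathfrak{n}_1}\oplus\mathfrak{u}(\mathfrak{n}_1,J_1,g)$; this again traces back to the wrong Lee form. The K\"ahler sketch and the SKT outline ($dd^c\omega=0$ plus a normal-form analysis of $A$) are plausible in spirit but remain plans rather than arguments, the SKT step especially. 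Note also that the paper does not prove this proposition: it is a recollection of results from \cite{LV}, \cite{AO}, \cite{FP1} and \cite{AL}, so the concrete benchmark available in the text is the Lee form formula \eqref{leeform}, which your computation contradicts; you should redo that computation before relying on it elsewhere.
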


We also recall that, in terms of an adapted unitary basis, the Lee form of a Hermitian almost abelian Lie algebra $(\mathfrak{g}(a,v,A),J,g)$ is given by
\begin{equation} \label{leeform}
\theta=(Jv)^{\flat} - (\operatorname{tr} A)e^{2n},
\end{equation}
where the isomorphism $(\cdot)^\flat \colon \mathfrak{g} \to \mathfrak{g}^*$ is defined by $X^\flat\coloneqq g(X,\cdot)$, $X \in \mathfrak{g}$. See \cite{FP1} for details.

We are ready to prove the analogous characterization for LCB structures.
\begin{theorem} \label{LCB_avA}
A Hermitian almost abelian Lie algebra $(\mathfrak{g}(a,v,A),J,g)$ is LCB if and only if \mbox{$A^tv=0$}.
\end{theorem}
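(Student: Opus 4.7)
The plan is to compute $d\theta$ directly from the explicit Lee form formula \eqref{leeform} together with the Chevalley--Eilenberg differential determined by the bracket data in \eqref{B}, and then read off the vanishing condition. Since $[\mathfrak{g},\mathfrak{g}]\subseteq\mathfrak{n}$, the $1$-form $e^{2n}$ has no component appearing in any bracket, hence $de^{2n}=0$, and so
\[
d\theta = d((Jv)^\flat) - (\operatorname{tr} A)\,de^{2n} = d((Jv)^\flat).
\]
The whole statement therefore reduces to deciding when $d((Jv)^\flat)=0$.

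Next I would evaluate $d((Jv)^\flat)$ using $d\alpha(X,Y)=-\alpha([X,Y])$ for $\alpha\in\mathfrak{g}^*$. Since $\mathfrak{n}$ is abelian, $d((Jv)^\flat)$ is supported on $\mathfrak{k}\wedge\mathfrak{n}$, and it suffices to test it on pairs $(e_{2n},Y)$ for $Y\in\mathfrak{n}$. Splitting $Y=y_1 e_1 + Y'$ with $Y'\in\mathfrak{n}_1$ and using \eqref{B}, one has $[e_{2n},Y]=y_1(ae_1+v)+AY'$, so
\[
d((Jv)^\flat)(e_{2n},Y) = -y_1 a\,g(Jv,e_1)\,-\,y_1\,g(Jv,v)\,-\,g(Jv,AY').
\]
The first two summands vanish for structural reasons: $Jv\in\mathfrak{n}_1$ is orthogonal to $e_1\in J\mathfrak{k}$, and $g(Jv,v)=0$ by the $g$-skewness of $J$. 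The condition $d\theta=0$ thus collapses to $g(A^tJv,Y')=0$ for every $Y'\in\mathfrak{n}_1$, which, since $A^t$ preserves $\mathfrak{n}_1$ and $Jv\in\mathfrak{n}_1$, is equivalent to $A^tJv=0$.

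The final step is to upgrade $A^tJv=0$ to $A^tv=0$. Since $A\in\mathfrak{gl}(\mathfrak{n}_1,J_1)$ commutes with $J_1$ by \eqref{B}, and $J_1$ is $g$-skew (because $g$ is $J$-invariant), the relation $[A,J_1]=0$ passes to $[A^t,J_1]=0$; invertibility of $J_1$ then gives $A^tJv = J_1 A^t v = 0 \iff A^t v = 0$, completing the equivalence. I do not expect any serious obstacle: modulo keeping track of the orthogonal splitting $\mathfrak{g}=J\mathfrak{k}\oplus\mathfrak{n}_1\oplus\mathfrak{k}$ and the vanishing of the two ``easy'' terms above, the argument is essentially a one-line Chevalley--Eilenberg computation built on top of the Lee-form formula already recorded in the paper.
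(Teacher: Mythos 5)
Your proposal is correct and follows essentially the same route as the paper: both compute $d\theta$ from the Lee form \eqref{leeform} via the Chevalley--Eilenberg differential, observe that the terms $g(Jv,e_1)$ and $g(Jv,v)$ vanish so that only $(A^tJv)^\flat\wedge e^{2n}$ survives, and then use $[A^t,J_1]=0$ (from $[A,J_1]=0$ and $J_1^t=-J_1$) together with the invertibility of $J_1$ to convert $A^tJv=0$ into $A^tv=0$. The only cosmetic difference is that you evaluate $d((Jv)^\flat)$ pointwise on pairs $(e_{2n},Y)$ rather than using the compact identity $d\alpha=(\mathrm{ad}_{e_{2n}}^*\alpha)\wedge e^{2n}$.
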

\begin{proof}
Observe that, given any $1$-form $\alpha \in \mathfrak{g}^*$, since
$d\alpha(X,e_{2n})=\alpha([e_{2n},X])$, $X \in \mathfrak{g}$, one has
\[
d\alpha=(\text{ad}_{e_{2n}}^* \alpha) \wedge e^{2n} = (a\alpha(e_1)+\alpha(v))\,e^1 \wedge e^{2n} + A^*(\alpha\rvert_{\mathfrak{n}_1}),
\]
with respect to the fixed adapted unitary basis $\{e_1,\ldots,e_{2n}\}$.
Then the exterior derivative of the Lee form \eqref{leeform} satisfies
\[
d\theta=g(v,Jv)\,e^1 \wedge e^{2n} + (A^tJv)^\flat \wedge e^{2n}=(A^tJv)^\flat \wedge e^{2n},
\]
where $A^t \in \mathfrak{gl}(\mathfrak{n}_1)$ is defined by $A^tX \coloneqq (A^*(X^\flat))^\sharp$, $X \in \mathfrak{n}_1$, $(\cdot)^\sharp$ denoting the inverse of $(\cdot)^\flat$. Then $d\theta$ vanishes if and only if $A^tJv=0$. $J_1$ commutes with $A$ and we have $J_1^t=-J_1$, so $J_1$ commutes with $A^t$ as well. The previous condition then reads $JA^tv=0$, which is equivalent to $A^tv=0$. 
\end{proof}

We note that the condition $A^tv=0$ is equivalent to $g(v,AX)=0$ for all $X \in \mathfrak{n}_1$. In particular, when $v \neq 0$, it implies $v \notin \operatorname{im} A$, so that $\text{rank}(v\lvert A)=\text{rank}(A)+1$, where $v \lvert A$ denotes the matrix obtained by juxtaposing $v$ and $A$.

In \cite{AL}, the authors determined a formula for the Bismut-Ricci form of a Hermitian almost abelian Lie algebra $(\mathfrak{g}(a,v,A),J,g)$, obtaining
\begin{equation}\label{rhoB}
\rho^B=-\left(a^2-\tfrac{1}{2}a\operatorname{tr}A+\lVert v \rVert^2\right)\, e^1 \wedge e^{2n} - (A^tv)^{\flat} \wedge e^{2n},
\end{equation}
in terms of the fixed adapted unitary basis $\{e_1,\ldots,e_{2n}\}$ (cf. also \cite{FP}).

The next result is a straightforward consequence of Theorem \ref{LCB_avA} and formula \eqref{rhoB}.
\begin{proposition}
A Hermitian almost abelian Lie algebra $(\mathfrak{g}(a,v,A),J,g)$ is LCB if and only if $\rho^B$ is of type $(1,1)$ (namely, $J\rho^B=\rho^B$), or equivalently if $\rho^B(X,Y)=0$ for every $X \in \mathfrak{n}_1$, $Y \in \mathfrak{g}$.
\end{proposition}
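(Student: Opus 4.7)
The plan is to show that each of the two conditions on $\rho^B$ stated in the proposition is equivalent to $A^tv=0$, so that Theorem \ref{LCB_avA} will yield the result. Both reductions follow from direct inspection of formula \eqref{rhoB}, which I rewrite as
\[
\rho^B = -c\, e^1 \wedge e^{2n} - (A^tv)^{\flat} \wedge e^{2n}, \qquad c \coloneqq a^2 - \tfrac{1}{2} a \operatorname{tr} A + \lVert v \rVert^2.
\]

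For the type $(1,1)$ equivalence, I would first observe that the relations $Je_1 = e_{2n}$ and $Je_{2n} = -e_1$, built into the adapted unitary basis, yield $J^* e^1 = -e^{2n}$ and $J^* e^{2n} = e^1$, so $e^1 \wedge e^{2n}$ is already of type $(1,1)$ and contributes nothing. Writing $w \coloneqq A^tv \in \mathfrak{n}_1$ and using that $J$ is $g$-skew-adjoint and preserves $\mathfrak{n}_1$, one gets $J^* w^{\flat} = -(Jw)^{\flat} \in \mathfrak{n}_1^*$, hence
\[
J^*\bigl(w^{\flat} \wedge e^{2n}\bigr) = -(Jw)^{\flat} \wedge e^1.
\]
This lies in $\mathfrak{n}_1^* \wedge \text{span}\langle e^1 \rangle \subset \Lambda^2 \mathfrak{g}^*$, a subspace that meets $\mathfrak{n}_1^* \wedge \text{span}\langle e^{2n}\rangle$ only in zero. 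Therefore $J^*\rho^B = \rho^B$ forces $w^{\flat} \wedge e^{2n} = 0$, i.e.\ $A^tv = 0$; the converse is immediate from the same display.

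For the pointwise condition, contracting \eqref{rhoB} with an arbitrary $X \in \mathfrak{n}_1$ kills the $e^1 \wedge e^{2n}$ summand (since $e^1(X) = e^{2n}(X) = 0$) and leaves
\[
\iota_X \rho^B = -g(A^tv, X)\, e^{2n}.
\]
Thus $\rho^B(X,Y) = 0$ for every $X \in \mathfrak{n}_1$, $Y \in \mathfrak{g}$ if and only if $g(A^tv, X) = 0$ for all $X \in \mathfrak{n}_1$, which, since $A^tv \in \mathfrak{n}_1$, is equivalent to $A^tv = 0$. There is no genuine obstacle here: the only bookkeeping requiring care is the behaviour of $J^*$ on the basis one-forms appearing in \eqref{rhoB}, and it reduces both intrinsic conditions on $\rho^B$ to the single algebraic equation $A^tv=0$ already characterized as LCB in Theorem \ref{LCB_avA}.
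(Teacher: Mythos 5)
Your argument is correct and is exactly the "straightforward consequence of Theorem \ref{LCB_avA} and formula \eqref{rhoB}" that the paper invokes without writing out: both intrinsic conditions on $\rho^B$ reduce to the vanishing of the $(A^tv)^\flat \wedge e^{2n}$ term, hence to $A^tv=0$. The bookkeeping with $J^*e^1=-e^{2n}$, $J^*e^{2n}=e^1$, $J^*w^\flat=-(Jw)^\flat$ and the contraction by $X\in\mathfrak{n}_1$ is all accurate, so nothing is missing.
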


\section{Classification in dimension six}

We now focus on the six-dimensional case, with the goal of classifying almost abelian Lie algebras admitting LCB structures.
As recalled in the introduction, LCB structures generalize K\"ahler, balanced and LCK structures. Six-dimensional almost abelian Lie algebras carrying K\"ahler structures and balanced structures were classified in \cite{FP} and \cite{FP1} respectively. Therefore, before considering strictly LCB structures, we focus on the LCK condition.

In the following, we denote a Lie algebra via its structure equations: for example, the notation
\[
\mathfrak{g}_{4}=(f^{16},f^{26},f^{36},f^{46},0,0)
\] 
means that the Lie algebra $\mathfrak{g}_{4}$ is determined by a fixed basis $\{f_1,\ldots,f_6\}$ whose dual coframe $\{f^1,\ldots,f^6\}$ satisfies $df^1=f^{16}$, $df^2=f^{26}$, $df^3=f^{36}$, $df^4=f^{46}$, $df^5=df^6=0$, where $f^{ij}$ is a shorthand for the wedge product $f^i \wedge f^j$.

In \cite{Saw}, it was proven that a nilpotent Lie algebra admits an LCK structure if and only if it is isomorphic to $\mathfrak{h}_{2n+1} \oplus \R$, for some $n \geq 1$, where
\[
\mathfrak{h}_{2n+1}=\left(0,\ldots,0,\sum_{i=1}^{n} f^{2i-1} \wedge f^{2i}\right)
\]
denotes the $2n+1$-dimensional real Heisenberg algebra. In particular, the four-dimensional $\mathfrak{h}_3 \oplus \R = (0,0,0,f^{12})$ is the only one which is also almost abelian and, by \cite[Remark 3.4 (ii)]{AO} and \cite[Remarks 2.1, 2.3]{AngO}, one of the only two almost abelian Lie algebras admitting non-K\"ahler Vaisman metrics, up to isomorphism, the other one being $\mathfrak{aff}_2 \oplus 2\R$, where $\mathfrak{aff}_2 =(0,f^{12})$ denotes the two-dimensional real affine Lie algebra. In fact, every Hermitian metric on $\mathfrak{h}_3 \oplus \R$ and $\mathfrak{aff}_2 \oplus 2\R$ is Vaisman.

\begin{theorem} \label{LCK_class}
Let $\mathfrak{g}$ be a six-dimensional almost abelian Lie algebra. Then $\mathfrak{g}$ admits an LCK structure $(J,g)$, but no K\"ahler structures, if and only if it is isomorphic to one of the following:
\begin{itemize}
	\setlength{\itemindent}{-1em}
	\item[] $\mathfrak{g}_{1}\cm{=\mathfrak{k}_1^{p,p}}=(f^{16},pf^{26},pf^{36},pf^{46},pf^{56},0)$,  \,  $p \neq 0$,\smallskip
	\item[] $\mathfrak{g}_{2}\cm{=\mathfrak{k}_{8}^{p,q,q}}=(pf^{16},qf^{26},qf^{36},qf^{46}+f^{56},-f^{46}+qf^{56},0)$,  \,  $pq \neq 0$, \smallskip
	\item[] $\mathfrak{g}_{3}\cm{=\mathfrak{k}_{11}^{p,q,q,s}}=(pf^{16},qf^{26}+f^{36},-f^{26}+qf^{36},qf^{46}+rf^{56},-rf^{46}+qf^{56},0)$,  \,  $pq \neq 0$,  $r \neq 0$,\smallskip
	\item[] $\mathfrak{g}_{4}\cm{=\mathfrak{k}_{20}^1}=(f^{16},f^{26},f^{36},f^{46},0,0)$,\smallskip
	\item[] $\mathfrak{g}_{5}\cm{=\mathfrak{k}_{22}^{1,r}}=(f^{16},f^{26},f^{36}+rf^{46},-rf^{36}+f^{46},0,0)$, \, $r \neq 0$, \smallskip
	\item[] $\mathfrak{g}_{6}\cm{=\mathfrak{k}_{25}^{p,p,r}}=(pf^{16}+f^{26},-f^{16}+pf^{26},pf^{36}+rf^{46},-rf^{36}+pf^{46},0,0)$, \, $pr \neq 0$.
\end{itemize}
Among these, only the indecomposable Lie algebras $\mathfrak{g}_{1}^{p=-\frac{1}{4}}$\cm{$\mathfrak{k}_1^{-\frac{1}{4},-\frac{1}{4}}$}, $\mathfrak{g}_{2}^{p=-4q}$ \cm{$\mathfrak{k}_{8}^{p,-\frac{1}{4}p,-\frac{1}{4}p}$} and $\mathfrak{g}_{3}^{p=-4q}$ \cm{$\mathfrak{k}_{11}^{p,-\frac{1}{4}p,-\frac{1}{4}p,s}$} are unimodular. None of the corresponding Lie groups admit compact quotients by lattices, by \cite[Theorem 3.7]{AO}. \end{theorem}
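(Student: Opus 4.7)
The plan is to combine the LCK characterization of Proposition \ref{Herm-alm-ab} with the classification of six-dimensional almost abelian Lie algebras admitting complex structures from \cite{FP}. Since $n = 3$, a Hermitian almost abelian Lie algebra $(\mathfrak{g}(a,v,A),J,g)$ is LCK if and only if $v=0$ and $A = c\,\text{Id}_{\mathfrak{n}_1} + U$ for some $c \in \R$ and $U \in \mathfrak{u}(\mathfrak{n}_1,J_1,g)$, and it is K\"ahler precisely in the subcase $c=0$. Identifying $(\mathfrak{n}_1,J_1) \cong \C^2$, the operator $A_{\C}$ is normal with eigenvalues $c+i\alpha, c+i\beta$ for some $\alpha, \beta \in \R$, so after a unitary change of basis preserving $(J,g)$ I may assume $A$ is in real Jordan form with $2 \times 2$ blocks
\[
R_{c,\gamma} = \begin{pmatrix} c & \gamma \\ -\gamma & c \end{pmatrix}, \qquad \gamma \in \{\alpha,\beta\}.
\]
Rescaling $e_{2n}$ by $1/c$, which is possible because $c \neq 0$ in the non-K\"ahler case, I may normalize $c = 1$, so that $B$ is, up to the ordering of blocks, of the form $\operatorname{diag}(a, R_{1,\alpha}, R_{1,\beta})$.

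First I would enumerate the isomorphism classes of such $B$ according to $(\alpha,\beta)$ up to permutation and sign, and further according to whether $a = 0$ or $a \neq 0$. The relevant cases are $(\alpha, \beta) = (0,0)$, $(\alpha, 0)$ with $\alpha \neq 0$, and $(\alpha,\beta)$ both nonzero, the degenerate sub-case $\alpha = \pm\beta$ being absorbed into the last case (it corresponds to $r = 1$ in $\mathfrak{g}_3$ or $\mathfrak{g}_6$). This produces six normal forms, which I would then match with entries in the classification of \cite{FP}: $\mathfrak{g}_1$ (for $A = cI_4$, $a \neq 0$) and $\mathfrak{g}_4$ (for $a = 0$); $\mathfrak{g}_2$ (for $A = \operatorname{diag}(cI_2, R_{c,\alpha})$, $a \neq 0$) and $\mathfrak{g}_5$ (for $a = 0$); finally $\mathfrak{g}_3$ (for $A = \operatorname{diag}(R_{c,\alpha}, R_{c,\beta})$, $a \neq 0$) and $\mathfrak{g}_6$ (for $a = 0$).

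To exclude the existence of a K\"ahler structure on any of these $\mathfrak{g}_i$, I would use the invariance of the spectrum of $\operatorname{ad}_{e_{2n}}\rvert_{\mathfrak{n}}$ up to conjugation and real rescaling: a K\"ahler Lie algebra requires $B$ to have one real eigenvalue together with four purely imaginary ones, whereas in each $\mathfrak{g}_i$ the four non-scalar eigenvalues all share the common nonzero real part $c$, which no real rescaling can convert into purely imaginary values. The unimodularity condition $\operatorname{tr}B = a + 4c = 0$ forces $p = -\tfrac{1}{4}$ in $\mathfrak{g}_1$ and $p = -4q$ in $\mathfrak{g}_2, \mathfrak{g}_3$, while the trace never vanishes in $\mathfrak{g}_4, \mathfrak{g}_5, \mathfrak{g}_6$, each of which is moreover decomposable via the splitting off of the central $\R$-summand $\R f_5$. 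The non-existence of lattices in the three unimodular families then reduces to a direct application of \cite[Theorem 3.7]{AO}. The hard part will be the bookkeeping to ensure the enumeration of normal forms is both exhaustive and non-redundant, and to verify that each such form is genuinely realized by the corresponding Lie algebra in the \cite{FP} list.
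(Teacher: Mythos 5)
Your proposal is correct and follows the same skeleton as the paper's proof: both reduce via the characterization $v=0$, $A=\lambda\,\text{Id}_{\mathfrak{n}_1}+U$, $U\in\mathfrak{u}(\mathfrak{n}_1,J_1,g)$, to an enumeration of normal forms for $A$, and then match the resulting matrices $B$ against the list. You differ in two localized steps, both legitimate. First, you obtain the normal forms directly: $A$ is a normal operator commuting with $J_1$, hence unitarily block-diagonalizable into blocks $R_{c,\gamma}$ with a common real part $c=\operatorname{tr}A/4$. The paper instead starts from the five normal forms $A_1,\dots,A_5$ of matrices commuting with $J_1$ taken from \cite[Theorem 3.2]{FP} and discards $A_4$, $A_5$ (not complex-diagonalizable) and the $p\neq q$ cases (eigenvalues of $A_i-\tfrac{\operatorname{tr}A_i}{4}\operatorname{Id}$ not purely imaginary); the two routes land on the same three families, yours being slightly more self-contained while the paper's reuses the list \eqref{A} that is needed again for the LCB classification. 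Second, to exclude K\"ahler structures the paper merely imposes $\operatorname{tr}A_i\neq 0$, implicitly deferring to the known K\"ahler classification in \cite{FP}, whereas you argue directly that $\operatorname{Spec}(\operatorname{ad}_X\rvert_{\mathfrak{n}})$, an isomorphism invariant up to real rescaling since $\mathfrak{n}$ is the unique codimension-one abelian ideal of these non-nilpotent algebras, contains four eigenvalues of common nonzero real part and hence can never contain the four purely imaginary eigenvalues a K\"ahler structure would require; this is a small but genuine gain in explicitness. The remaining bookkeeping you flag (exhaustiveness of the $(\alpha,\beta)$ case split, the trace computation for unimodularity, and the citation of \cite[Theorem 3.7]{AO} for the non-existence of lattices) matches the paper and is routine.
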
 
\begin{proof}
Let $(J,g)$ be an LCK structure on $\mathfrak{g}$. Let $\{e_1,\ldots,e_6\}$ be a unitary basis of $(\mathfrak{g},J,g)$ adapted to the splitting $\mathfrak{g}=J\mathfrak{k} \oplus \mathfrak{n}_1 \oplus \mathfrak{k}$, so that, by \cite{AO}, the matrix $B$ associated with $\text{ad}_{e_6}\rvert_{\mathfrak{n}}$ is of the form \eqref{B}, with
\begin{equation} \label{LCK}
v=0, \quad A= \lambda\,\text{Id}_{\mathfrak{n}_1} + U,\; \lambda \in \R,\,U \in \mathfrak{u}(\mathfrak{n}_1,J_1,g).
\end{equation}
Since $U$ is traceless, one must have $\lambda=\frac{\operatorname{tr} A}{4}$.
Following \cite[Theorem 3.2]{FP}, up to taking a different basis $\{e_2,\ldots,e_5\}$ for $\mathfrak{n}_1$ and rescaling $e_6$, the fact that $A$ commutes with $J_1$ forces $A$ to be represented by a real $4 \times 4$ matrix of one of the following types:
\begin{equation} \label{A}
A_1=\left( \begin{smallmatrix} p&0&0&0 \\ 0&p&0&0 \\ 0&0&q&0 \\ 0&0&0&q \end{smallmatrix} \right)\!\!, \,
A_2=\left( \begin{smallmatrix} p&1&0&0 \\ -1&p&0&0 \\ 0&0&q&0 \\ 0&0&0&q \end{smallmatrix} \right)\!\!, \,
A_3=\left( \begin{smallmatrix} p&1&0&0 \\ -1&p&0&0 \\ 0&0&q&r \\ 0&0&-r&q \end{smallmatrix} \right)\!\!, \,
A_4=\left( \begin{smallmatrix} p&1&0&0 \\ 0&p&0&0 \\ 0&0&p&1 \\ 0&0&0&p \end{smallmatrix} \right)\!\!, \,
A_5=\left( \begin{smallmatrix} p&1&-1&0 \\ -1&p&0&-1 \\ 0&0&p&1 \\ 0&0&-1&p \end{smallmatrix} \right)\!\!,
\end{equation}
$p,q,r \in \R$, with $r \neq 0$ to avoid redundancy.
All we need to do is determine which matrices $A_i$ in \eqref{A} can be decomposed as $\lambda \operatorname{Id} + U$ for some $\lambda \in \R$, $U \in \mathfrak{u}(\mathfrak{n}_1,J_1,g)$. For each $i=1,2,3,4,5$, consider the matrix $U_i=A_i - \frac{\operatorname{tr} A}{4}\operatorname{Id}$: for $i=4,5$, $U_i$ is never complex-diagonalizable (namely, diagonalizable as a complex matrix), so it cannot be skew-symmetric with respect to any metric; for $i=1,2,3$, the requirement that all the eigenvalues of $U_i$ should be pure imaginary imposes $p=q$, so that one is left with
\[
U_1=\left( \begin{smallmatrix} 0&0&0&0 \\ 0&0&0&0 \\ 0&0&0&0 \\ 0&0&0&0 \end{smallmatrix} \right)\!, \quad
U_2=\left( \begin{smallmatrix} 0&1&0&0 \\ -1&0&0&0 \\ 0&0&0&0 \\ 0&0&0&0 \end{smallmatrix} \right)\!, \quad
U_3=\left( \begin{smallmatrix} 0&1&0&0 \\ -1&0&0&0 \\ 0&0&0&r \\ 0&0&-r&0 \end{smallmatrix} \right)\!,
\]
all of which are skew-symmetric with respect to the standard metric and commute with
\[
J_1=\left( \begin{smallmatrix} 0&-1&0&0\\1&0&0&0 \\ 0&0&0&-1 \\ 0&0&1&0  \end{smallmatrix} \right).
\]
Completing the corresponding $A_i$ to the full matrix
\[
B=\begin{pmatrix} a & 0 \\ 0 & A_i \end{pmatrix}
\]
representing $\text{ad}_{e_6}\rvert_{\mathfrak{n}}$ and assuming $\operatorname{tr} A_i \neq 0$ to discard the K\"ahler cases,
one can easily see which algebras can be obtained:
\begin{itemize}
\item[] $A_1$ yields $\mathfrak{g}_{1}$ and $\mathfrak{g}_{4}$,
\item[] $A_2$ yields $\mathfrak{g}_{2}$ and $\mathfrak{g}_{5}$,
\item[] $A_3$ yields $\mathfrak{g}_{3}$ and $\mathfrak{g}_{6}$. \qedhere 
\end{itemize}
\end{proof}

\begin{theorem} \label{LCB_class}
Let $\mathfrak{g}$ be a six-dimensional almost abelian Lie algebra which does not admit balanced or LCK structures. If $\mathfrak{g}$ is nilpotent, then it admits an LCB structure $(J,g)$ if and only if it is isomorphic to one of the following:
\begin{itemize}
\setlength{\itemindent}{-1em}
\item[] $(0,0,0,0,0,f^{12})$,
\item[] $(0,0,0,f^{12},f^{13},f^{14})$.
\end{itemize}
If $\mathfrak{g}$ is non-nilpotent, then it admits an LCB structure $(J,g)$ if and only if it is isomorphic to one of the following:
\begin{itemize}
\setlength{\itemindent}{-1em}
\item[] $\lie_{1} \hspace{1.35mm} =(f^{16},pf^{26},pf^{36},qf^{46},qf^{56},0)$,  \,  $pr \neq 0$, $p \neq \pm q$, \smallskip
\item[] $\lie_{2} \hspace{1.35mm} =(f^{16},pf^{26}+f^{36},pf^{36},pf^{46}+f^{56},pf^{56},0)$, \, $p \neq 0$, \smallskip
\item[] $\lie_{3} \hspace{1.35mm} =(pf^{16},qf^{26},qf^{36},rf^{46}+f^{56},-f^{46}+rf^{56},0)$,  \,  $pq \neq 0$, $q \neq \pm r$, \smallskip
\item[] $\lie_{4} \hspace{1.35mm} =(pf^{16},qf^{26}+f^{36},-f^{26}+qf^{36},rf^{46}+sf^{56},-sf^{46}+rf^{56},0)$, \, $pqs \neq 0$, $q\neq \pm r$,\smallskip
\item[] $\lie_{5} \hspace{1.35mm} =(pf^{16},qf^{26}+f^{36}-f^{46},-f^{26}+qf^{36}-f^{56},qf^{46}+f^{56},-f^{46}+qf^{56},0)$, \,  $pq \neq 0$, \smallskip
\item[] $\lie_{6} \hspace{1.35mm} =(f^{16},f^{26},0,0,0,0)$, \smallskip
\item[] $\lie_{7} \hspace{1.35mm} = (f^{16},f^{26}+f^{36},f^{36},0,0,0)$, \smallskip
\item[] $\lie_{8} \hspace{1.35mm} =(pf^{16}+f^{26},-f^{16}+pf^{26},0,0,0,0)$, \, $p \neq 0$, \smallskip
\item[] $\lie_{9} \hspace{1.35mm} =(f^{16},pf^{26},pf^{36},0,0,0)$, \,  $p \neq 0$,\smallskip
\item[] $\lie_{10} =(pf^{16},qf^{26}+f^{36},-f^{26}+qf^{36},0,0,0)$,  \, $pq \neq 0$, \smallskip
\item[] $\lie_{11}                =(f^{16},f^{26},pf^{36},pf^{46},0,0)$, \,  $p \neq 0,\pm1$,\smallskip
\item[] $\lie_{12}                =(f^{16},f^{26},f^{46},0,0,0)$,\smallskip
\item[] $\lie_{13}                =(f^{16},f^{26},qf^{36}+rf^{46},-rf^{36}+qf^{46},0,0)$, \, $q \neq \pm 1$, $r \neq 0$, \smallskip
\item[] $\lie_{14}                =(pf^{16}+f^{26},-f^{16}+pf^{26},f^{46},0,0,0)$, \smallskip
\item[] $\lie_{15}                =(f^{16}+f^{26},f^{26},f^{36}+f^{46},f^{46},0,0)$, \smallskip
\item[] $\lie_{16}                =(pf^{16}+f^{26},-f^{16}+pf^{26},qf^{36}+rf^{46},-rf^{36}+qf^{46},0,0)$, \, $r \neq 0$, $p^2+q^2 \neq 0$, $p \neq \pm q$, \smallskip
\item[] $\lie_{17}                =(pf^{16}+f^{26}-f^{36},-f^{16}+pf^{26}-f^{46},pf^{36}+f^{46},-f^{36}+pf^{46},0,0)$, \, $p \neq 0$.
\end{itemize}
Among these, only $\lie_1^{q=-\frac{1}{2}-p}$,  $\lie_{2}^{p=-\frac{1}{4}}$, $\lie_{3}^{r=-\frac{p}{2}-q}$, $\lie_{4}^{r=-\frac{p}{2}-q}$, $\lie_{5}^{q=-\frac{p}{4}}$, $\lie_{9}^{p=-\frac{1}{2}}$, $\lie_{10}^{q=-\frac{p}{2}}$ and $\lie_{14}^{p=0}$ are unimodular.  \end{theorem}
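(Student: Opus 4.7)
The plan is to convert the LCB condition into the algebraic constraint of Theorem \ref{LCB_avA}, namely $A^tv=0$, and combine it with the classification of six-dimensional almost abelian Lie algebras admitting a complex structure obtained in \cite{FP}. That classification provides, up to change of adapted unitary basis and rescaling of $e_{6}$, a finite list of normal forms for the pair $(a,A)$ together with the admissible $v \in \mathfrak{n}_1$; imposing $A^tv=0$ on each such normal form reduces the problem to a finite case-by-case check.

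First I would handle the nilpotent case. Here $B=\text{ad}_{e_{6}}|_{\mathfrak{n}}$ is nilpotent, so $a=0$ and $A \in \mathfrak{gl}(\mathfrak{n}_1,J_1)$ is nilpotent; the complex Jordan normal forms of such $A$ give only a handful of possibilities. For each, I solve the linear system $A^tv=0$ in $v \in \mathfrak{n}_1$ and use the residual centralizer of $A$ in $U(\mathfrak{n}_1,J_1,g)$ to put $v$ into a canonical form. I then discard the balanced cases (where $v=0$, which in the nilpotent setting is automatic from $\operatorname{tr}A=0$) and the unique nilpotent LCK case $\mathfrak{h}_3 \oplus \R$ from \cite{Saw}. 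Computing the structure equations from the resulting $B$ identifies the two remaining algebras listed in the statement.

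For the non-nilpotent case I would run the same scheme over the list of normal forms of $(a,A)$ from \cite{FP}, which splits into the ten blocks determined by the rational canonical form of $A$ commuting with $J_1$ (essentially the matrices $A_1,\dots,A_5$ of \eqref{A}, with either $a\neq 0$ or $a=0$, and with further free parameters). For each, I would write $A^t$ in the adapted basis, solve $A^tv=0$ for $v \in \mathfrak{n}_1$, and normalize $v$ using the centralizer of $A$ in $U(\mathfrak{n}_1,J_1,g)$ together with the rescaling $e_{6} \mapsto \lambda e_{6}$. I would then exclude the structures with $v=0$ and $\operatorname{tr}A=0$ (balanced, classified in \cite{FP1}) and those with $v=0$ and $A \in \R\operatorname{Id}_{\mathfrak{n}_1}\oplus\mathfrak{u}(\mathfrak{n}_1,J_1,g)$ (LCK, classified in Theorem \ref{LCK_class}). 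The surviving $B$'s are rewritten as structure equations in a basis $\{f_1,\dots,f_6\}$ of $\mathfrak{g}$, producing the algebras $\lie_1,\dots,\lie_{17}$. Coincidences between normal forms coming from different adapted bases are ruled out by comparing invariants such as the Jordan type of $\text{ad}_{e_{6}}|_{\mathfrak{n}}$, the nilradical, the center, and the derived series. The unimodularity statement is immediate: an almost abelian Lie algebra is unimodular if and only if $a+\operatorname{tr}A=0$, and imposing this on each $\lie_i$ produces the displayed one-parameter subfamilies.

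The conceptual content of the proof is entirely contained in Theorem \ref{LCB_avA}; the main obstacle is organizational bookkeeping. Specifically, one must enumerate, within each $(a,A)$-block of \cite{FP}, all $v$ solving $A^tv=0$ up to the centralizer action, and then verify that no two distinct $(a,v,A)$ normal forms accidentally produce isomorphic Lie algebras. This is what forces the somewhat long list $\lie_1,\dots,\lie_{17}$ and determines the precise parameter inequalities ($p \neq \pm q$, $r\neq 0$, $p^2+q^2\neq 0$, etc.) that remove redundancies and overlaps with the balanced and LCK classifications.
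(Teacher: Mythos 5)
Your proposal is correct and follows essentially the same route as the paper: reduce to the algebraic condition $A^tv=0$ of Theorem \ref{LCB_avA}, run through the normal forms $A_1,\dots,A_5$ of \eqref{A} from \cite{FP}, treat $v=0$ and $v\neq 0$ separately (noting that $v\neq0$ forces $A$ degenerate), discard the balanced and LCK cases, and read off unimodularity from $a+\operatorname{tr}A=0$. The only organizational difference is in the $v\neq0$ case, where the paper, instead of normalizing $v$ by the unitary centralizer of $A$ and deduplicating afterwards, directly absorbs any component of $v$ lying in $\operatorname{im}(A-a\,\mathrm{Id}_{\mathfrak{n}_1})$ by replacing $e_1$ with $e_1-X$, so that the only genuinely new Lie algebras arise when $a\in\operatorname{Spec}(A)$ and the corresponding Jordan block of $B$ grows by one.
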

\begin{proof}
Let $(J,g)$ be an LCB structure on $\mathfrak{g}$. As in Theorem \ref{LCK_class}, we need to examine each matrix $A_i$ in \eqref{A} to see whether they can satisfy the LCB condition $A_i^tv=0$ for some suitable metric and vector $v$. Of course $v=0$ is a sufficient condition and, in this case, after discarding the algebras admitting balanced or LCK structures (including the nilpotent $(0,0,0,0,f^{12},f^{13})$, which admits balanced structures, by \cite{Uga}), we have that
\begin{itemize}
\item[] $A_1$ yields $\lie_{1}$, $\lie_{6}$, $\lie_{9}$ and $\lie_{11}$,
\item[] $A_2$ yields $\lie_{3}$, $\lie_{8}$, $\lie_{10}$ and $\lie_{13}$, \item[] $A_3$ yields $\lie_{4}$ and $\lie_{16}$,
\item[] $A_4$ yields $\lie_{2}$ and $\lie_{15}$,
\item[] $A_5$ yields $\lie_{5}$ and $\lie_{17}$.
\end{itemize}
To complete the classification, we now assume $v \neq 0$. Then $A^tv=0$ forces $A$ to be degenerate: we are then left with $A_1^{q=0}$, $A_2^{q=0}$ (both with $p$ possibly vanishing) and $A_4^{p=0}$. If $a=g([e_6,e_1],e_1)$ is not an eigenvalue of $A_i$, then $\operatorname{im}(A-a\,\text{Id}_{\mathfrak{n}_1})=\mathfrak{n}_1$, so that $v=AX-aX$ for some $X \in \mathfrak{n}_1$ and the matrix $B$ corresponding to $\text{ad}_{e_6}\rvert_{\mathfrak{n}}$ can be brought into the form
\[
B=\begin{pmatrix} a & 0 \\ 0 & A_i \end{pmatrix},
\]
simply by replacing $e_1$ with $e_1^\prime=e_1-X$, so that eventually we get some of the previously found Lie algebras. Otherwise, if $a$ is an eigenvalue of $A_i$, the algebraic multiplicity of $a$ as an eigenvalue of $B$ (namely, its multiplicity as a root of the characteristic polynomial) might exceed its algebraic multiplicity for $A_i$ by one: this happens exactly when $v \notin \operatorname{im}(A-a\,\text{Id}_{\mathfrak{n}_1})$ and, in this case, $B$ is similar to a $5 \times 5$ matrix obtained by taking $A_i$ and raising the rank of a Jordan block relative to the eigenvalue $a$ by one: this can occur for $A_1^{q=0}$, when $a=p$ or $a=0$, for $A_2^{q=0}$ when $a=0$ and for $A_4^{p=0}$, $a=0$.

For the cases $A_{i}^{q=0}$, $i=1,2$, with $a=0$, one can simply assume that the basis $\{e_2,\ldots,e_5\}$ with respect to which $A_i$ is in the form \eqref{A} is orthonormal, with $Je_2=e_3$, $Je_4=e_5$, and take $v=e_4$, for instance. 

For $A_4^{p=0}$, assume again that $\{e_2,e_3,e_4,e_5\}$ is orthonormal, this time satisfying $Je_2=e_4$, $Je_3=e_5$ and take $v=e_3$, for example.

For the remaining case $A_1^{q=0}$, $a=p \neq 0$, one can consider for example the Hermitian almost abelian Lie algebra $(\mathfrak{g}(a,v,A),J,g)$ determined by the data
\[
a=p,\quad v= \left( \begin{smallmatrix} 0\\0\\1\\0 \end{smallmatrix} \right), \quad A=\left( \begin{smallmatrix} p&1&0&0\\0&0&0&0 \\ 0&0&0&0 \\ 0&0&1&p \end{smallmatrix} \right),\,p \neq 0,
\]
with respect to an adapted unitary basis $\{e_1,\ldots,e_6\}$, $Je_i=e_{7-i}$, $i=1,2,3$. Then, it is easy to check that $A$ is similar to $A_1^{q=0}$ and that $A^tv=0$, $v \notin \operatorname{im}(A-p\,\text{Id}_{\mathfrak{n}_1})$, so that the structure is LCB and the whole matrix $B$ is similar to
\[
\left(\begin{smallmatrix} p&1&0&0&0 \\ 0&p&0&0&0 \\ 0&0&p&0&0 \\ 0&0&0&0&0 \\ 0&0&0&0&0 \end{smallmatrix} \right)
\]
as desired.

These new cases with $v \neq 0$ yield Lie algebras isomorphic to $\lie_{7}$, $\lie_{12}$, $\lie_{14}$ or one of the two nilpotent Lie algebras of the statement, concluding the proof.
\end{proof}

\begin{remark}
It can be shown that, among the unimodular Lie groups whose Lie algebra appears in Theorem \ref{LCB_class}, the ones with Lie algebra
$\lie_1^{q=-\frac{1}{2}-p}$, $\lie_{2}^{p=-\frac{1}{4}}$, $\lie_{3}^{r=-\frac{p}{2}-q}$ and $\lie_{9}^{p=-\frac{1}{2}}$
do not admit any compact quotients by lattices.

We prove this only for $\lie_1^{q=-\frac{1}{2}-p}$, since the discussion for the other two Lie algebras is analogous.
Following \cite{Bock}, a co-compact lattice exists on such Lie groups if and only if there exists a non-zero $t_0 \in \R$ and a basis of $\mathfrak{n}$ such that the matrix associated with $\text{exp}(t_0\text{ad}_{f_6})\rvert_{\mathfrak{n}}$ has integer entries. In the basis $\{f_1,\ldots,f_5\}$ one easily computes 
\begin{equation} \label{exptad}
\text{exp}(t\,\text{ad}_{f_6})\rvert_{\mathfrak{n}}=\text{diag}\big(e^t,e^{pt},e^{pt},e^{-pt-\frac{1}{2}t},e^{-pt-\frac{1}{2}t}\big).
\end{equation}
Its minimal polynomial, namely the monic polynomial $P_t$ of least degree such that $P_t(\text{exp}(t\,\text{ad}_{f_6})\rvert_{\mathfrak{n}})=0$, is of the form $P_t(x)=\sum_{i=0}^3 a_i(t,p) x^i$, with coefficients
\[
a_0=-e^{\frac{t}{2}},\quad a_1=e^{t(1+p)}+e^{-\frac{t}{2}}+e^{t\left(\frac{1}{2}-p\right)},\quad a_2=-e^{pt}-e^t-e^{-t\left(\frac{1}{2}+p\right)},\quad a_3=1.
\]
If \eqref{exptad} is conjugate to an integer matrix for some $t_0$, then necessarily $P_{t_0}(x)$ is an integer polynomial, so that $a_0(t_0,p) \in \mathbb{Z}$ forces $t_0=2\log k$, for some $k \in \mathbb{Z}_{>0}$. 
Assuming $a_2(t_0,p) \in \mathbb{Z}$, one computes
\[
k^2\left(k^2+a_2(t_0,p)\right)+a_1(t_0,p)=\tfrac{1}{k},
\]
which is integer if and only if $k=1$, that is, $t_0=0$, a contradiction.

Instead, for some choices of the parameters, the Lie groups with Lie algebra $\lie_{4}^{r=-\frac{p}{2}-q}$, $\lie_{10}^{q=-\frac{p}{2}}$ and $\lie_{14}^{p=0}$ admit co-compact lattices (see \cite{FP} and the references therein). Some results are known for the remaining Lie groups, namely the ones corresponding to $\lie_{5}^{q=-\frac{p}{4}}$ \cm{$\mathfrak{k}_{12}^{p,-\frac{p}{4}}$} (see \cite{CM}), but the existence of lattices on them is still an open problem.
\end{remark}

\section{Compatibility results between Hermitian metrics} \label{sec_comp}

In this section, we ask whether a (unimodular) almost abelian Lie algebra endowed with a fixed complex structure may admit two different kinds of special Hermitian metrics.

In order to carry over the results to almost abelian solvmanifolds, we exploit the well-known ``symmetrization'' process. We summarize the results we need in the next lemma.
Recall that a solvable Lie group is called \emph{completely solvable} if all the eigenvalues of $\text{ad}_X$ are real, for every $X$ in its Lie algebra.
\begin{lemma} {\normalfont (\cite{Bel, FG, Uga, Saw, AU})} \label{symm}
Let $\Gamma \backslash G$ be a compact solvmanifold endowed with a left-invariant complex structure $(J,g)$. Then, the existence of a balanced (resp.\ SKT) metric implies the existence of a left-invariant balanced (resp.\ SKT) metric.
If $G$ is completely solvable, the analogous results hold for LCK and LCB metrics.
\end{lemma}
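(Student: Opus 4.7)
The plan is to apply the symmetrization procedure standard in the cited references. Since $G$ admits a lattice it is unimodular, so the Haar measure induces a finite bi-invariant measure $\mu$ on $\Gamma \backslash G$. For any form $\alpha$ on $\Gamma \backslash G$ one defines its symmetrization $\widetilde\alpha \in \Lambda^\bullet \mathfrak{g}^*$ by averaging the pullbacks $L_x^*\alpha$ against $\mu$. The resulting map $\alpha \mapsto \widetilde\alpha$ is linear, commutes with $d$ (and, since $J$ is left-invariant, with $\partial$ and $\bar\partial$), preserves the bidegree decomposition, sends strictly positive $(p,p)$-forms to strictly positive $(p,p)$-forms, and satisfies $\widetilde{\eta \wedge \alpha} = \eta \wedge \widetilde\alpha$ whenever $\eta$ is already left-invariant.

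For the SKT case, if $\partial \bar\partial \omega = 0$ then $\partial \bar\partial \widetilde\omega = \widetilde{\partial \bar\partial \omega} = 0$, and $\widetilde\omega$ is the fundamental form of a left-invariant SKT metric. For the balanced case, $\widetilde{\omega^{n-1}}$ is a closed, real, left-invariant, strictly positive $(n-1,n-1)$-form; by Michelsohn's correspondence it coincides with $\omega_0^{n-1}$ for a unique left-invariant Hermitian form $\omega_0$, which is then balanced by construction.

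For the LCK and LCB cases, assume $G$ is completely solvable. Hattori's theorem then yields an isomorphism $H^\bullet_{dR}(\Gamma \backslash G;\R) \cong H^\bullet(\mathfrak{g};\R)$, so the closed Lee form $\theta$ decomposes as $\theta = \widetilde\theta + df$ with $\widetilde\theta \in \mathfrak{g}^*$ left-invariant and closed and $f \in C^\infty(\Gamma \backslash G)$. A direct computation shows that the conformal modification $\omega' \coloneqq e^{-f/(n-1)}\omega$ has Lee form exactly $\widetilde\theta$, satisfying $d\omega' = \tfrac{1}{n-1}\widetilde\theta \wedge \omega'$ in the LCK case and $d(\omega')^{n-1} = \widetilde\theta \wedge (\omega')^{n-1}$ in the LCB case. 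Symmetrizing $\omega'$ (for LCK) or $(\omega')^{n-1}$ (for LCB, followed by Michelsohn) and factoring out the left-invariant $\widetilde\theta$, one obtains a left-invariant Hermitian form satisfying the same equation, hence LCK (resp. LCB).

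The main obstacle lies in this last step: the defining conditions for LCK and LCB involve the metric-dependent Lee form, so a naive symmetrization of $\omega$ need not produce a metric whose own Lee form is closed. Complete solvability is used precisely to extract, via Hattori's isomorphism, a left-invariant representative of $[\theta]$; without this hypothesis the above conformal reduction is unavailable and the argument breaks down, explaining why the lemma only asserts the LCK/LCB conclusion in the completely solvable setting.
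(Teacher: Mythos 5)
Your argument is correct and is exactly the standard symmetrization proof from the cited references (Belgun's averaging commuting with $d$, Michelsohn's root for the balanced/LCB cases, and Hattori's isomorphism plus a conformal change to reduce the Lee form to an invariant representative in the completely solvable case); the paper itself states this lemma without proof, deferring to those references. No gaps.
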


\subsection{SKT and LCB} The SKT condition and the balanced condition are two ``transversal'' generalizations of the K\"ahler condition. Indeed, by \cite{AI} a Hermitian metric which is both SKT and balanced is K\"ahler and it has been conjectured in \cite{FV} that a compact complex manifold admitting an SKT metric and a balanced metric necessarily admits a K\"ahler metric as well. For almost abelian solvmanifolds, the conjecture was proven in \cite{FP1}. 

The same transversality no longer holds when considering the weaker LCB condition instead of the balanced condition, and the same Hermitian metric can even be SKT and LCB at the same time: in \cite{FT1}, it was proven that
every non-K\"ahler compact homogeneous complex surface admits a compact torus bundle carrying an SKT and LCB metric; moreover, an example of compact nilmanifold in any even dimension admitting a left-invariant metric which is both SKT and LCB with respect to a fixed left-invariant complex structure was exhibited in \cite{OOS}.

In addition, recalling that LCK metrics are particular instances of LCB metrics, it is easy to see that a non-K\"ahler LCK almost abelian Lie algebra $(\mathfrak{g}(a,v,A),J,g)$ is also SKT if and only if it satisfies \eqref{LCK} with $a \neq 0$ and $\lambda=-\frac{a}{2}$ or if $n=2$, $\mathfrak{g} \cong \mathfrak{h}_3 \oplus \R$ or $\mathfrak{g} \cong \mathfrak{aff}_2 \oplus 2\R$ (cf. also \cite{FP1}).

\begin{proposition}
Let $\mathfrak{g}$ be an almost abelian Lie algebra endowed with a complex structure $J$. If $(\mathfrak{g},J)$ admits an SKT metric, then it admits an LCB metric as well.
\end{proposition}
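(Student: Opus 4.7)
The plan is to keep the complex structure $J$ fixed and replace the given SKT metric by a Hermitian metric $g'$ whose associated data $(a,v',A)$ satisfies the LCB condition $A^{t}v'=0$ of Theorem~\ref{LCB_avA}. The key device is a one-parameter gauge change: starting from an adapted unitary basis $\{e_{1},\dots,e_{2n}\}$ for $(J,g)$, I replace $e_{1}$ by $e_{1}' := e_{1}+x$ for some $x\in\mathfrak{n}_{1}$, set $e_{2n}' := Je_{1}' = e_{2n}+Jx$, and leave $e_{2},\dots,e_{2n-1}$ unchanged; the new metric $g'$ is defined by declaring this basis orthonormal. Since $\mathfrak{n}_{1}$ is $J$-invariant and the basis $\{e_{2},\dots,e_{2n-1}\}$ of $\mathfrak{n}_{1}$ is preserved, $(g',J)$ is Hermitian and adapted, and crucially $g'$ coincides with $g$ on $\mathfrak{n}_{1}$.

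A direct bracket computation, using that $\mathfrak{n}$ is abelian so $[Jx,\,\cdot\,]|_{\mathfrak{n}}=0$, gives $a'=a$, the operator $A$ on $\mathfrak{n}_{1}$ unchanged, and $v' = v + (A-aI)x$. Thus the task reduces to finding $x\in\mathfrak{n}_{1}$ with $A^{t}\bigl(v+(A-aI)x\bigr)=0$, with $A^{t}$ taken relative to the unchanged inner product on $\mathfrak{n}_{1}$. I would split into two cases, driven by the SKT characterization of Proposition~\ref{Herm-alm-ab}. If $a\neq 0$, every eigenvalue of $A-aI$ has real part $-\tfrac{3a}{2}$ or $-a$, all nonzero, so $A-aI$ is invertible and the choice $x = -(A-aI)^{-1}v$ gives $v'=0$, which is trivially LCB. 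If $a=0$, then $A$ is normal with purely imaginary spectrum, hence skew-symmetric with respect to $g|_{\mathfrak{n}_{1}}$; consequently $\ker A$ and $\operatorname{im} A$ are orthogonal and $A$ restricts to an automorphism of $\operatorname{im} A$, so $\operatorname{im}(A^{2}) = \operatorname{im} A \ni -Av$. Solving $A^{2}x = -Av$ then produces $v' = v + Ax \in \ker A$, whence $A^{t}v' = -Av' = 0$.

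The main technical point to verify is that the gauge change genuinely produces a Hermitian metric compatible with $J$ and preserving the adapted block form \eqref{B} of $B$; this is essentially automatic because $\mathfrak{n}_{1}$ is an ideal of $\mathfrak{g}$ (so $A$ is intrinsic and unchanged) and because modifying $e_{1}$ within $\mathfrak{n}_{1}$-directions respects the splitting $\mathfrak{g} = J\mathfrak{k}\oplus\mathfrak{n}_{1}\oplus\mathfrak{k}$. Once these compatibility checks are in place, the LCB conclusion follows from the two-case linear algebra above.
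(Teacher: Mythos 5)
Your proof is correct and takes essentially the same route as the paper's: both perform the gauge change $e_1 \mapsto e_1 + x$ with $x \in \mathfrak{n}_1$ (equivalently, re-declare a shifted adapted basis orthonormal while keeping $g$ on $\mathfrak{n}_1$) to translate $v$ by an arbitrary element of $\operatorname{im}(A - a\,\mathrm{Id}_{\mathfrak{n}_1})$, and both use the SKT eigenvalue condition to see that $A - a\,\mathrm{Id}_{\mathfrak{n}_1}$ is invertible when $a \neq 0$. The only cosmetic difference is in the case $a = 0$: the paper just takes the component of $v$ in $(\operatorname{im} A)^{\perp_g} = \ker A^t$, whereas you invoke the normality of $A$ to get skew-symmetry and land in $\ker A$; both arguments are valid.
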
  
\begin{proof} Let $g$ denote the SKT metric. By \cite{AL}, with respect a unitary basis $\{e_1,\ldots,e_{2n}\}$ of $\mathfrak{g}$ adapted to the splitting $\mathfrak{g}=J\mathfrak{k} \oplus \mathfrak{n}_1 \oplus \mathfrak{k}$, the matrix $B$ associated with $\text{ad}_{e_{2n}}\rvert_{\mathfrak{n}}$ is of the form \eqref{B}, with $[A,A^t]=0$ and the eigenvalues of $A$ having real part equal to $-\frac{a}{2}$ or $0$.

Decompose $v \in \mathfrak{n}_1=\operatorname{im}(A-a\,\text{Id}_{\mathfrak{n}_1}) \oplus \left(\operatorname{im}(A-a\,\text{Id}_{\mathfrak{n}_1})\right)^{\perp_g}$ as $v=AX-aX+v^\prime$ for some $X \in \mathfrak{n}_1$ and $v^\prime \in \left(\operatorname{im}(A-a\,\text{Id}_{\mathfrak{n}_1})\right)^{\perp_g}$, that is, $(A-a\,\text{Id}_{\mathfrak{n}_1})^tv^\prime=0$.

Consider the new $J$-Hermitian metric $g^\prime=g\rvert_{\mathfrak{n}_1} + ({e^1}^\prime)^2 + ({e^{2n}}^\prime)^2$, with $e_1^\prime=e_1-X$, $e_{2n}^\prime=Je_1^\prime$. Then, the matrix $B^\prime$ associated with $\text{ad}_{e_{2n}^\prime}\rvert_{\mathfrak{n}}$ with respect to the new adapted unitary basis $\{e_1^\prime,e_2,\ldots,e_{2n-1}\}$ for $\mathfrak{n}$ is of the form
\[
B^\prime=\begin{pmatrix} a & 0 \\ v^\prime & A \end{pmatrix},
\]
with $A$ as above and $(A-a\,\text{Id}_{\mathfrak{n}_1})^tv^\prime=0$. If $a \neq 0$, $a$ is not an eigenvalue of $A$, so that $v^\prime=0$. Instead, if $a=0$, we have $A^tv^\prime=0$. In either case, the metric $g^\prime$ is LCB.
\end{proof}

Using Lemma \ref{symm}, we get
\begin{corollary}
Let $\Gamma \backslash G$ be a compact almost abelian solvmanifold endowed with a left-invariant complex structure $J$. If $(\Gamma \backslash G,J)$ admits an SKT metric, then it admits an LCB metric as well.
\end{corollary}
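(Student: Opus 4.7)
The plan is to combine the previous Proposition with the symmetrization Lemma \ref{symm}, applied in one direction only. Starting from an SKT metric on $\Gamma \backslash G$, Lemma \ref{symm} yields a left-invariant SKT metric on $G$ with respect to the same complex structure $J$, and this step does not require complete solvability. Descending to the Lie algebra, we obtain an SKT Hermitian structure $(\mathfrak{g}, J, g)$ on the almost abelian Lie algebra $\mathfrak{g}$ of $G$.

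Now I would apply the preceding Proposition to $(\mathfrak{g}, J, g)$: this gives a left-invariant LCB Hermitian metric $g'$ on the same pair $(\mathfrak{g}, J)$. Being left-invariant, $g'$ automatically descends to a Hermitian metric on $\Gamma \backslash G$ (since $\Gamma$ acts by left translations), and the LCB condition $d\theta = 0$ is a pointwise condition on differential forms which is preserved under the quotient. Therefore $(\Gamma \backslash G, J)$ admits an LCB metric.

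The only subtlety worth flagging is that we are \emph{not} invoking symmetrization in the LCB direction, which would require complete solvability; we use symmetrization only for SKT, and use the Proposition to produce the LCB metric directly at the level of the Lie algebra before pushing it back down to the solvmanifold. Since there is no actual obstacle beyond threading together these two earlier results, the proof will be essentially a two-line argument.
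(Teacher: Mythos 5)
Your proof is correct and is exactly the argument the paper intends with its one-line ``Using Lemma \ref{symm}, we get'': symmetrize the SKT metric (which needs no complete solvability), apply the preceding Proposition at the Lie algebra level, and push the resulting left-invariant LCB metric back down to the quotient. Your explicit remark that the LCB direction of Lemma \ref{symm} is never invoked is a correct and worthwhile clarification of why no completely solvable hypothesis appears in the corollary.
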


\begin{example} \label{exampleSKTLCB}
We now exhibit an example of compact almost abelian solvmanifold in any even dimension admitting a (left-invariant) Hermitian structure which is at the same time SKT and LCB. For any $n \geq 2$, consider the $2n$-dimensional simply connected unimodular almost abelian Lie group $S_{2n}$ having indecomposable Lie algebra $\mathfrak{s}_{2n}$ endowed with a fixed coframe $\{e^1,\ldots,e^{2n}\}$ satisfying the structure equations
\begin{gather*}
de^1=a\,e^1 \wedge e^{2n},\quad de^2=-\tfrac{a}{2} \, e^2 \wedge e^{2n} + e^3 \wedge e^{2n},\quad de^3=- e^2 \wedge e^{2n} - \tfrac{a}{2} \, e^3 \wedge e^{2n}, \quad de^{2n}=0, \\
de^{2i}=c \, e^{2i+1} \wedge e^{2n},\quad de^{2i+1}=-c \, e^{2i} \wedge e^{2n},\quad i=2,\ldots,n-1,
\end{gather*}
for some $a,c \in \R - \{0\}$, with $c$ depending on $a$ in a way which we shall explain. 
Now, it is easy to check that the left-invariant Hermitian structure $(J,g)$ on $S_{2n}$ defined by
\[
Je_1=e_{2n}, \quad Je_{2i}=e_{2i+1},\,i=1,\ldots,n-1, \quad g=\sum_{i=1}^{2n} (e^i)^2,
\]
is both SKT and LCB, satisfying in particular $v=0$.
	
As we shall now show, $S_{2n}$ admits compact quotients by lattices, for all $n$, for some values of $a$ and $c$: by \cite{Bock}, this is equivalent to proving that there exists $t_0 \in \R-\{0\}$ such that $\text{exp}(t_0B_{2n})$ is similar to an integer matrix, $B_{2n}$ being the $(2n-1) \times (2n-1)$ matrix representing $\text{ad}_{e_{2n}}\rvert_{\mathfrak{n}}$ in the fixed basis. The claim is true for $n=2$ for countably many values of $a \in \R-\{0\}$, with compact quotients of $S_4$ biholomorphic to Inoue surfaces (see \cite[Section 3.2.2]{AO} for a detailed discussion). Fixing $a \in \R$ such that $S_4$ admits co-compact lattices, let $t_0\in \R-\{0\}$ be such that $\text{exp}(t_0B_{4})$ is similar to an integer matrix and set $c \coloneqq \tfrac{2\pi}{t_0}$, so that
\[
\text{exp}\left(t_0 \left( \begin{smallmatrix} 0 & c \\ -c & 0 \end{smallmatrix} \right)\right)=\begin{pmatrix} 1 & 0 \\ 0 & 1 \end{pmatrix}
\]
is an integer matrix. The claim then easily follows in any dimension by induction.
\end{example}

\subsection{Balanced and LCK}
By \cite{AO}, almost abelian Lie groups which admit left-invariant LCK structures and compact quotients by lattices only exist in real dimension four. The resulting solvmanifolds are biholomorphic to primary Kodaira surfaces, Inoue surfaces, hyperelliptic surfaces or complex tori: out of these, the only ones admitting K\"ahler metrics (recall that K\"ahler is equivalent to balanced, in real dimension four) are
complex tori or hyperellyptic surfaces, which, by \cite{HK}, cannot admit non-K\"ahler LCK metrics. Thus, we phrase the next result only in terms of structures on Lie algebras and not on compact almost abelian solvmanifolds, where the situation is already completely understood.

It was proven in \cite{OOS} that a nilpotent Lie algebra cannot admit a balanced metric and a non-K\"ahler LCK metric both compatible with the same complex structure. In the almost abelian setting, the situation is analogous, apart from one exception in the non-unimodular case.
\begin{proposition} \label{prop_balLCK}
	Let $\mathfrak{g}$ be an almost abelian Lie algebra endowed with a complex structure $J$. If $(\mathfrak{g},J)$ admits a balanced metric, then it does not admit any non-K\"ahler LCK metrics, unless $\mathfrak{g} \cong \mathfrak{aff}_2 \oplus 2\R$.
\end{proposition}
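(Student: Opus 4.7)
My plan is to compare the algebraic data $(a_1,v_1,A_1)$ and $(a_2,v_2,A_2)$ attached to the balanced metric $g_1$ and to a hypothetical non-K\"ahler LCK metric $g_2$ via the characterizations in Proposition~\ref{Herm-alm-ab}, using the following metric-independent observation. If $\{e_1,\ldots,e_{2n}\}$ and $\{f_1,\ldots,f_{2n}\}$ are adapted unitary bases for $g_1$ and $g_2$, then both $e_{2n}$ and $f_{2n}$ are complements of $\mathfrak{n}$ in $\mathfrak{g}$, so there exist $\alpha\in\R\setminus\{0\}$ and $n_0\in\mathfrak{n}$ with $e_{2n}=\alpha f_{2n}+n_0$. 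Since $\mathfrak{n}$ is abelian, $\operatorname{ad}_{n_0}$ vanishes on $\mathfrak{n}$, giving
\[
\operatorname{ad}_{e_{2n}}\rvert_{\mathfrak{n}} \;=\; \alpha\,\operatorname{ad}_{f_{2n}}\rvert_{\mathfrak{n}}.
\]
Note that the $J$-invariant subspace $\mathfrak{n}_1=\mathfrak{n}\cap J\mathfrak{n}$ depends only on $J$, not on the metric, so both bases single out the same $\mathfrak{n}_1$.

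First I would treat the generic LCK case, in which $v_2=0$ and $A_2=\lambda_2\operatorname{Id}_{\mathfrak{n}_1}+U_2$ for some $U_2\in\mathfrak{u}(\mathfrak{n}_1,J_1,g_2)$; non-K\"ahlerness is equivalent to $\lambda_2\neq 0$, since $\lambda_2=0$ would put $A_2$ in $\mathfrak{u}(\mathfrak{n}_1,J_1,g_2)$. The vanishing of both $v_1$ and $v_2$ ensures that $\operatorname{ad}_{e_{2n}}$ and $\operatorname{ad}_{f_{2n}}$ preserve $\mathfrak{n}_1$, with restrictions $A_1$ and $A_2$ respectively. Proportionality then reads $A_1=\alpha A_2$, and the balanced condition $\operatorname{tr} A_1=0$ forces
\[
0 \;=\; \alpha\,\operatorname{tr} A_2 \;=\; \alpha\,(2n-2)\,\lambda_2,
\]
so $\lambda_2=0$, a contradiction.

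It remains to address the exceptional subcase of Proposition~\ref{Herm-alm-ab}: $n=2$, $A_2=0$ and $v_2\neq 0$ (if instead $v_2=0$, the previous analysis still applies and produces K\"ahlerness). Proportionality now gives $\operatorname{ad}_{e_4}\rvert_{\mathfrak{n}_1}=\alpha A_2=0$, which combined with $v_1=0$ shows that the matrix of $\operatorname{ad}_{e_4}\rvert_{\mathfrak{n}}$ in $\{e_1,e_2,e_3\}$ is $\operatorname{diag}(a_1,0,0)$. If $a_1=0$, then $\mathfrak{g}$ is abelian and also $\operatorname{ad}_{f_4}=0$, forcing $v_2=0$; hence $a_1\neq 0$ and, after rescaling $e_4$, the only nontrivial bracket of $\mathfrak{g}$ is $[e_4,e_1]=e_1$. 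This identifies $\mathfrak{g}$ with $\mathfrak{aff}_2\oplus 2\R$, precisely the exception allowed by the statement.

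I do not expect any serious technical obstacle; the key step is the proportionality of the two $\operatorname{ad}$-operators restricted to $\mathfrak{n}$, which reduces the whole question to a one-line trace computation. The only delicate point is remembering the $n=2$ loophole in the LCK characterization, in which $A$ is allowed to vanish together with a nonzero $v$, since this is exactly where the exceptional Lie algebra $\mathfrak{aff}_2\oplus 2\R$ enters.
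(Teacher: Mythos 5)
Your proof is correct and follows essentially the same route as the paper: both arguments come down to the metric-independence of $\operatorname{tr}\left(\text{ad}_X\rvert_{\mathfrak{n}_1}\right)$ (which you make explicit via the proportionality $\text{ad}_{e_{2n}}\rvert_{\mathfrak{n}}=\alpha\,\text{ad}_{f_{2n}}\rvert_{\mathfrak{n}}$), forcing $\lambda_2=0$ in the generic LCK case, together with a separate treatment of the $n=2$, $A=0$ loophole that isolates $\mathfrak{aff}_2\oplus 2\R$. The only thing the paper includes that you omit is an explicit non-K\"ahler LCK metric on $\mathfrak{aff}_2\oplus 2\R$ compatible with a K\"ahler complex structure, which shows the exception is genuine but is not required by the literal statement.
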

\begin{proof}
	As we have recalled in Proposition \ref{Herm-alm-ab}, an LCK almost abelian Lie algebra $(\mathfrak{g}(a,v,A),J,g)$ can either satisfy
	\eqref{LCK} or $n=2$, $A=0$, which corresponds to $\mathfrak{g} \cong \mathfrak{h}_3 \oplus \R$ (if $a=0$, $v \neq 0$), $\mathfrak{g} \cong \mathfrak{aff}_2 \oplus 2\R$ (if $a \neq 0$) or to $4\R$ (if $a=0$, $v=0$).
	
	Let $g$ denote an LCK metric and assume \eqref{LCK}. The result readily follows by observing that, in order to admit a balanced metric, $(\mathfrak{g},J)$ must satisfy $\operatorname{tr} A=\operatorname{tr} \text{ad}_X \rvert_{\mathfrak{n}_1}=0$ for all $X \in \mathfrak{g}$, so that $A \in \mathfrak{u}(\mathfrak{n}_1,J_1,g)$. This implies that $g$ is K\"ahler.
	
	If $n=2$ (in which case balanced implies K\"ahler) and we have $A=0$, note that $\mathfrak{h}_3 \oplus \R$ does not admit K\"ahler structures, while all Hermitian structures on $4\R$ are K\"ahler. On $\mathfrak{aff}_2 \oplus 2\R = (f^{12},0,0,0)$, consider the complex structure defined by $Jf_1=f_2$, $Jf_3=f_4$. The Hermitian metric $g=\sum_{i=1}^4 (f^i)^2$ is K\"ahler, while, denoting $f^i \odot f^j=\frac{1}{2}(f^i \otimes f^j +f^j \otimes f^i)$,
	\[
	g^\prime=2(f^1)^2+2(f^2)^2+(f^3)^2+(f^4)^2+2\,f^1 \odot f^3+2\,f^2 \odot f^4
	\]
	is non-K\"ahler and LCK, with fundamental form $\omega^\prime=2f^{12}+e^{14}-e^{23}+e^{34}$ satisfying $d\omega^\prime=f^{124}=(f^2+f^4) \wedge \omega^\prime$, $d(f^2+f^4)=0$.
\end{proof}

\subsection{Balanced and LCB}
Balanced metrics are trivially LCB. One could ask whether there exist non-K\"ahler compact solvmanifolds endowed with a left-invariant complex structure admitting both balanced metrics and non-balanced LCB metrics. 

For nilmanifolds, the answer is affirmative, as shown in \cite{OOS}.
As a corollary of the next proposition, the analogous result is not true for completely solvable almost abelian solvmanifolds.
\begin{proposition} \label{prop_balLCB}
Let $\mathfrak{g}$ be a unimodular almost abelian Lie algebra endowed with a complex structure $J$. If $(\mathfrak{g},J)$ carries a balanced metric, then it cannot admit any non-balanced LCB metrics.
\end{proposition}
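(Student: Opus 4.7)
The plan is to show that the algebraic data $(a',v',A')$ of any LCB metric $g'$ on $(\mathfrak{g},J)$ is forced to satisfy the balanced conditions of Proposition \ref{Herm-alm-ab}, by comparing it to the data of a given balanced metric $g$.

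First I would fix a $g$-adapted unitary basis $\{e_1,\ldots,e_{2n}\}$ giving data $(0,0,A)$ with $\operatorname{tr} A=0$ (here $a=0$ follows from the balanced condition $\operatorname{tr} A=0$ combined with unimodularity, which reads $a+\operatorname{tr} A=0$). Given a $g'$-adapted unitary basis $\{e_1',\ldots,e_{2n}'\}$ with data $(a',v',A')$, the key structural observation is that $e_{2n}'\in\mathfrak{n}^{\perp_{g'}}$ decomposes as $\lambda e_{2n}+Y$ with $\lambda\neq 0$ and $Y\in\mathfrak{n}$; since $\mathfrak{n}$ is abelian, $\operatorname{ad}_Y$ vanishes on $\mathfrak{n}$, so that $\operatorname{ad}_{e_{2n}'}\rvert_{\mathfrak{n}}=\lambda\,\operatorname{ad}_{e_{2n}}\rvert_{\mathfrak{n}}$ as endomorphisms of $\mathfrak{n}$.

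Restricting this identity to the $J$-intrinsic (hence metric-independent) subspace $\mathfrak{n}_1$, the matrices $A'$ and $\lambda A$ become similar, yielding $\operatorname{tr} A'=\lambda\operatorname{tr} A=0$ and $\operatorname{rank} A'=\operatorname{rank} A$. Unimodularity then forces $a'=0$. For the full matrix $B'=\left(\begin{smallmatrix} 0 & 0 \\ v' & A' \end{smallmatrix}\right)$, the vanishing first row gives $\operatorname{rank} B'=\operatorname{rank}(v'\mid A')$; on the other hand, since $B'$ represents $\operatorname{ad}_{e_{2n}'}\rvert_{\mathfrak{n}}=\lambda\operatorname{ad}_{e_{2n}}\rvert_{\mathfrak{n}}$ in the new basis and $B$ is block-diagonal with blocks $(0)$ and $A$, this rank also equals $\operatorname{rank} A$. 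Comparing forces $v'\in\operatorname{im}(A')$.

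To conclude, the LCB condition from Theorem \ref{LCB_avA} gives $(A')^t v'=0$, i.e., $v'\in\operatorname{im}(A')^{\perp_{g'}}$. Positive-definiteness of $g'$, combined with $v'\in\operatorname{im}(A')$, forces $v'=0$, and together with $\operatorname{tr} A'=0$ this shows, via Proposition \ref{Herm-alm-ab}, that $g'$ is balanced. I expect the main subtlety to be carefully transporting the rank and trace information between the two adapted bases; this is exactly what the abelianness of $\mathfrak{n}$ (which makes the change of transversal element act by a single scalar) and the $J$-intrinsic nature of $\mathfrak{n}_1$ make possible.
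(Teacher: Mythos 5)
Your proof is correct and follows essentially the same route as the paper: both arguments compare the rank of $\operatorname{ad}$ of a transversal element in the two adapted bases and use the observation that the LCB condition $(A')^tv'=0$ makes $v'$ orthogonal to $\operatorname{im}A'$, so that $v'\neq 0$ would raise the rank by one relative to the balanced picture. Your write-up merely makes explicit a step the paper leaves implicit, namely that $A'$ is similar to $\lambda A$ (via the metric-independent subspace $\mathfrak{n}_1$ and the abelianness of $\mathfrak{n}$), hence $\operatorname{rank}A'=\operatorname{rank}A$ and $\operatorname{tr}A'=0$.
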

\begin{proof}
By the characterization of unimodular balanced almost abelian Lie algebras, we know that $[\mathfrak{g},\mathfrak{g}] \subset \mathfrak{n}_1$, since there exists an adapted unitary basis with respect to the balanced metric satisfying $a=0$, $v=0$. In particular, $\text{rank}(\text{ad}_X)=\text{rank}(A)$ for all $X \in \mathfrak{g}$ transverse to $\mathfrak{n}$. Assume a non-balanced LCB metric $g$ exists. Then, any adapted unitary basis for $(\mathfrak{g},J,g)$ satisfies $a=0$, $\operatorname{tr} A=0$, $A^tv=0$, with $v \neq 0$ to ensure the metric is non-balanced. Now, this implies $\text{rank}(\text{ad}_X)=\text{rank}(A)+1$ for all $X \in \mathfrak{g}$ transverse to $\mathfrak{n}$, since $v \notin \operatorname{im} A$, a contradiction.
\end{proof}

Recalling Lemma \ref{symm}, we obtain
\begin{corollary}
Let $\Gamma \backslash G$ be a completely solvable almost abelian solvmanifold endowed with a left-invariant complex structure $J$. If $(\Gamma \backslash G,J)$ carries a balanced metric, then it cannot admit any non-balanced LCB metrics.
\end{corollary}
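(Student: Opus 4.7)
The plan is to reduce to the Lie algebra statement Proposition \ref{prop_balLCB} via the symmetrization Lemma \ref{symm} and then derive a contradiction.

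Starting from a balanced metric on $\Gamma \backslash G$, Lemma \ref{symm} produces a left-invariant balanced metric, which descends to a balanced Hermitian structure on $(\mathfrak{g}, J)$. I would then argue by contradiction: assume that $\Gamma \backslash G$ also admits a non-balanced LCB metric. Since $G$ is completely solvable, the LCB clause of Lemma \ref{symm} yields a left-invariant LCB metric on $\Gamma \backslash G$, and hence an LCB structure on $(\mathfrak{g}, J)$. Proposition \ref{prop_balLCB}, applied with the balanced structure above, then forces this left-invariant LCB structure to be balanced.

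To finish, one needs that the symmetrization can be arranged to preserve the non-balanced character of the metric, so that the induced LCB structure on $(\mathfrak{g}, J)$ is actually non-balanced and Proposition \ref{prop_balLCB} is contradicted. This is the only point requiring work, and in my view the main obstacle: in the completely solvable setting, the averaging preserves the de Rham class of the Lee form via the Hattori-type identification $H^1_{\mathrm{dR}}(\Gamma \backslash G) \simeq H^1(\mathfrak{g}^*)$ through left-invariant representatives. Hence a non-exact Lee form symmetrizes to a non-zero left-invariant $1$-form and the symmetrized metric remains non-balanced LCB, delivering the required contradiction. The residual case in which the Lee form is exact reduces, via the associated global conformal change, to the already-handled balanced case on $\Gamma \backslash G$, and can be absorbed in the same argument at the Lie algebra level.
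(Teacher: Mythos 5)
Your overall strategy coincides with the paper's: symmetrize both metrics using Lemma \ref{symm} and feed the resulting left-invariant structures into Proposition \ref{prop_balLCB}. You are also right to isolate the point the paper leaves implicit, namely that the symmetrized LCB metric must still be non-balanced for the contradiction to appear, and your treatment of the main case is sound: for completely solvable $G$ the symmetrization of an LCB metric has Lee form equal to the unique left-invariant representative of $[\theta]\in H^1_{\mathrm{dR}}(\Gamma\backslash G)\simeq H^1(\mathfrak{g})$, so a non-exact Lee form survives the averaging and Proposition \ref{prop_balLCB} applies.

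The gap is in your last sentence. If the Lee form of the LCB metric is exact but nonzero, the metric is genuinely non-balanced, yet no contradiction is available: such metrics always exist on any compact balanced manifold, since for a balanced metric $g_0$ with fundamental form $\omega_0$ and any non-constant $f$ the rescaling $e^fg_0$ satisfies $d(e^f\omega_0)^{n-1}=(n-1)df\wedge(e^f\omega_0)^{n-1}$, hence is LCB with Lee form $(n-1)df\neq 0$. So this case cannot be ``absorbed in the same argument at the Lie algebra level''; it shows that the statement is provable only if it is read either as a statement about left-invariant metrics (the reading consistent with Lemma \ref{symm} and with the comparison to \cite{OOS}) or with ``non-balanced'' understood as ``non-vanishing Lee class''. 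Under the left-invariant reading the residual case is vacuous rather than absorbed: a left-invariant exact $1$-form on a compact quotient is $df$ for a function attaining a maximum, hence vanishes at a point and therefore everywhere, so an exact Lee form of a left-invariant LCB metric is already zero and that metric is balanced. You should close the argument by making one of these two readings explicit, not by appealing to a contradiction that does not arise in the exact case.
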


\begin{remark}
We note that Proposition \ref{prop_balLCB} is no longer true if one drops the hypothesis of unimodularity: this is clear from the example on the four-dimensional Lie algebra $\mathfrak{aff}_2 \oplus 2\R$ in the proof of Proposition \ref{prop_balLCK}, recalling that LCK implies LCB. However, one can easily find other examples of complex structures of higher-dimensional almost abelian Lie algebras admitting both balanced and non-balanced LCB metrics: consider the six-dimensional almost abelian Lie algebra (see \cite{FP1})
\[
\mathfrak{b}_2=(f^{16},f^{36},0,f^{56},0,0),
\]
endowed with the complex structure defined by $Jf_1=f_6$, $Jf_2=f_4$, $Jf_3=f_5$. On it, one has the balanced metric $g=\sum_{i=1}^6 (f^i)^2$ and the non-balanced and non-LCK LCB metric
\[
g^\prime=3(f^1)^2+(f^2)^2+(f^3)^2+(f^4)^2+(f^5)^2+3(f^6)^2+2(f^1 \odot f^2 + f^1 \odot f^3 + f^4 \odot f^6 + f^5 \odot f^6),
\]
whose associated Lee form is the closed $1$-form $\theta^\prime=f^5+f^6$, as shown by a direct computation.
\end{remark}

\section{Locally conformally hyperk\"ahler metrics} \label{LCHK}
We now turn our attention to the study of (locally conformally) hyperk\"ahler metrics on almost abelian Lie algebras.

In the nilpotent setting, these structures were studied in \cite{OOS}, where it was proven that compact nilmanifolds never admit left-invariant LCHK structures, unless they are tori.

In the next theorem, we classify almost abelian Lie algebras admitting LCHK structures. Recall that the spectrum of a matrix (or an endomorphism) $D$, denoted by $\operatorname{Spec}(D)$, is the set of its eigenvalues. Given $z \in \mathbb{C}$, we denote by $m_D(z)$ its algebraic multiplicity for $D$, namely its multiplicity as a root of the characteristic polynomial of $D$. When $D$ is complex-diagonalizable, $m_D(z)$ is also equal to the (complex) dimension of the corresponding eigenspace.

\begin{theorem}
A $4m$-dimensional almost abelian Lie algebra $\mathfrak{g}=\R^{4m-1} \rtimes_D \R$ admits an LCHK structure if and only if $D \in \mathfrak{gl}_{4m-1}$ is complex-diagonalizable and
\begin{itemize}
	\item[\normalfont (i)] $\operatorname{Spec}(D) \subset a + \R i$, for some $a \in \R$,
	\item[\normalfont (ii)] $m_D(a) \geq 3$,
	\item[\normalfont (iii)] $m_D(a+ib) \in 2 \mathbb{Z}$, for every $b \in \R-\{0\}$.
\end{itemize}
The Lie algebra $\mathfrak{g}$ admits a hyperk\"ahler structure if and only if the above holds, with $a=0$, in which case $\mathfrak{g}$ is unimodular and decomposable ($\mathfrak{g}=\mathfrak{g}^\prime \oplus (m_D(0))\R$, with $\mathfrak{g}^\prime$ indecomposable) and every LCHK structure on $\mathfrak{g}$ is hyperk\"ahler. In particular, there do not exist unimodular almost abelian Lie algebras admitting non-hyperk\"ahler LCHK structures.
\end{theorem}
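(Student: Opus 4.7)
My plan is to split the argument into necessity of the three conditions, an explicit construction showing sufficiency, and the hyperk\"ahler-case identification.

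For necessity, suppose $(I_1,I_2,I_3,g)$ is an LCHK structure on $\mathfrak{g}$. Fix a unit vector $X_0 \in \mathfrak{k}\coloneqq \mathfrak{n}^{\perp_g}$ and set $Y_j \coloneqq I_j X_0$; these are three orthonormal vectors in $\mathfrak{n}$ (orthogonality follows from the quaternion relations and the $g$-orthogonality of each $I_j$). For every $j$, Proposition \ref{Herm-alm-ab} applied to $(I_j,g)$ yields either $v_j=0$ with $A_j = \lambda_j \operatorname{Id} + U_j$, $U_j \in \mathfrak{u}(\mathfrak{n}_{1,j}, I_j|_{\mathfrak{n}_{1,j}}, g|_{\mathfrak{n}_{1,j}})$, or the exceptional four-dimensional form $A_j=0$, $v_j \neq 0$. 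The latter is incompatible with an LCHK triple: it would force $\mathfrak{n}_{1,j}=\ker D$ to be the same two-dimensional subspace for all three $j$, and then the orthonormal $Y_1,Y_2,Y_3$ would lie in the one-dimensional subspace $(\ker D)^{\perp_g}$, a contradiction. Using \eqref{leeform} and the identity $\operatorname{tr}A_j=(4m-2)\lambda_j$, equality of the three Lee forms shows that $\lambda_j$ and $a_j\coloneqq g(DY_j,Y_j)$ are independent of $j$; call them $\lambda$ and $a$. Then $\operatorname{Spec}(D) \subset \{a\} \cup (\lambda + i\R)$; but having three linearly independent $a$-eigenvectors $Y_1,Y_2,Y_3$ forbids $\lambda \neq a$ (else $m_D(a)$ could only come from $\R Y_j$ and equal $1$). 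This forces $\lambda=a$, giving (i) and (ii) and showing that $D-a\operatorname{Id}$ is skew-symmetric on $(\mathfrak{n},g|_\mathfrak{n})$.

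For (iii), set $V_0 \coloneqq \operatorname{span}(X_0,Y_1,Y_2,Y_3)$ and $W \coloneqq V_0^{\perp_g} \subset \mathfrak{n}$. As $\operatorname{span}(Y_1,Y_2,Y_3) \subset \ker(D-a\operatorname{Id})$ is $D$-invariant and $D-a\operatorname{Id}$ is skew on $\mathfrak{n}$, the subspace $W$ is also $D$-invariant; moreover $W \subset \mathfrak{n}_{1,j}$ for every $j$, so $T\coloneqq D|_W - a\operatorname{Id}$ is skew and commutes with each $I_j|_W$, while $(I_j|_W)_{j=1,2,3}$ is an orthogonal quaternionic structure on $W$. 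For every $b>0$, the real invariant subspace $V_b \subset W$ of $T$ for the complex eigenvalue pair $\pm ib$ has real dimension $2m_D(a+ib)$ and is preserved by each $I_j|_W$, hence is a quaternionic subspace; therefore $m_D(a+ib)$ is even, which is (iii).

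For sufficiency, assume (i)--(iii). A parity count using $\dim\mathfrak{n}=4m-1$ and the evenness in (iii) forces $m_D(a) \equiv 3 \pmod 4$, so by (ii) one has $m_D(a) \in \{3,7,11,\ldots\}$. Choose an inner product on $\mathfrak{n}$ making $D-a\operatorname{Id}$ skew-symmetric (possible by (i)) and extend it to $g$ on $\mathfrak{g}=\R X_0 \oplus \mathfrak{n}$ by declaring $X_0$ a unit vector orthogonal to $\mathfrak{n}$; pick three orthonormal vectors $Y_1,Y_2,Y_3 \in \ker(D-a\operatorname{Id})$. Define $I_1,I_2,I_3$ on $V_0=\operatorname{span}(X_0,Y_1,Y_2,Y_3)$ by $I_jX_0=Y_j$ together with the standard quaternion relations. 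On $W=V_0^{\perp}$, orthogonally decompose $W = \ker T \oplus \bigoplus_{b>0} V_b$ with $T=D|_W-a\operatorname{Id}$; by the parity argument every summand has real dimension divisible by $4$. On each $V_b$ choose an orthogonal identification with $\mathbb{H}^{k_b}$ under which $T$ becomes $b$ times right-multiplication by a unit imaginary quaternion, and use any $\mathbb{H}$-module identification on $\ker T$; extend $I_1,I_2,I_3$ on $W$ as left multiplication by $i,j,k$. The three resulting almost complex structures on $\mathfrak{g}$ are orthogonal and satisfy $I_1I_2I_3=-\operatorname{Id}$, and by construction $v_j=0$ and $A_j=a\operatorname{Id}+U_j$ with $U_j\in \mathfrak{u}(\mathfrak{n}_{1,j},I_j|_{\mathfrak{n}_{1,j}},g|_{\mathfrak{n}_{1,j}})$; thus each $I_j$ is integrable (via the characterization \eqref{B}) and $(I_j,g)$ is LCK with common Lee form $-(4m-2)a\,X_0^\flat$, so the triple is LCHK.

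Finally, because $\operatorname{tr}A_j=(4m-2)a$, each $(I_j,g)$ is K\"ahler iff $a=0$, so LCHK is hyperk\"ahler iff $a=0$ and every LCHK structure with $a=0$ is hyperk\"ahler. In that case $\operatorname{tr}D=(4m-1)a=0$ gives unimodularity, $\ker D$ sits in the centre, and $\mathfrak{g} = \mathfrak{g}' \oplus (m_D(0))\R$ with $\mathfrak{g}' \coloneqq \R X_0 \oplus \ker(D)^{\perp_g}$ indecomposable (any further Lie-algebra splitting would place $X_0$ in one factor and force the complementary ideal to sit inside $\ker(D)^{\perp_g}$ while commuting with $X_0$, impossible since $D$ is invertible there). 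A non-hyperk\"ahler LCHK structure on a unimodular $\mathfrak{g}$ would require $a\neq 0$ yet $(4m-1)a=0$, a contradiction. The main technical point is the sufficiency construction, namely producing an orthogonal $\mathbb{H}$-action commuting with $T$ on every isotypic component of $W$; the divisibility by $4$ ensured by (iii) together with the parity of $\dim\mathfrak{n}$ is precisely what makes this possible.
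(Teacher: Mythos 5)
Your proposal is correct and follows essentially the same route as the paper's proof: reduce each $(I_j,g)$ to the LCK normal form of Proposition \ref{Herm-alm-ab}, show the real eigenvalue $a$ and the shift $\lambda$ coincide with multiplicity at least $3$ by exploiting the three orthonormal vectors $I_jX_0$, derive (iii) from the quaternionic structure induced on the common invariant complement (your $W$ is exactly the paper's $\mathfrak{m}$), and then build the converse explicitly. The two genuine differences are cosmetic but worth noting: you dispose of the four-dimensional exceptional case ($A=0$, $v\neq 0$) by a direct dimension count on $\mathfrak{n}_{1,j}=\ker D$, where the paper instead invokes Barberis's classification of four-dimensional hypercomplex Lie algebras via $\dim[\mathfrak{g},\mathfrak{g}]=1$; and your sufficiency construction is phrased in terms of quaternionic module identifications on each isotypic component rather than the paper's explicit block matrices \eqref{hyperD} and $K_1,K_2,K_3$ --- the content is the same. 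One justification is misattributed: equality of the three Lee forms gives only $\lambda_1=\lambda_2=\lambda_3$, since by \eqref{leeform} with $v_j=0$ the Lee form $\theta_j=-(\operatorname{tr}A_j)e^{4m}$ carries no information about $a_j=g(DY_j,Y_j)$. The constancy of the $a_j$ must instead be extracted from the spectral containments $\operatorname{Spec}(D)\subset\{a_j\}\cup(\lambda+i\R)$ together with the fact that each $Y_j$ is a $D$-eigenvector with real eigenvalue $a_j$ (if $a_1\neq a_2$ these force $a_1=\lambda=a_2$, a contradiction); this is a one-line fix using an argument you already have on the page, so the gap is not structural, but as written the sentence is false.
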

\begin{proof}
Assume $\mathfrak{g}$ admits an LCHK structure $(I_1,I_2,I_3,g)$. In particular, $(I_1,g)$ is an LCK structure, so that there exists an adapted $(I_1,g)$-unitary basis $\{e_1,\ldots,e_{4m}\}$ of $\mathfrak{g}$ such that the matrix $B$ associated with $\text{ad}_{e_{4m}}\rvert_{\mathfrak{n}}$ is of the form \eqref{B} with the conditions \eqref{LCK} or $m=1$, $A=0$, by \cite{AO}. Note that, up to conjugation, we obtain the same $B$ when considering a basis adapted to $(I_2,g)$ or $(I_3,g)$.

Assume \eqref{LCK} holds. It follows that $B$ is complex-diagonalizable, hence $D$ is. Moreover, $\operatorname{Spec}(B) \subset \{a\} \cup \lambda + \R i$. We claim that $\lambda=a$: if this were not the case, in order for $(I_2,g)$ to be LCK, one should have that $I_1(e_{4m})=\pm I_2(e_{4m})$, implying $I_3(e_{4m})=I_1I_2(e_{4m})= \pm e_{4m}$, contradicting $I_3^2=-\text{Id}_{\mathfrak{g}}$. For the same reason, it follows that $m_{B}(a) \geq 3$, to accommodate for the fact that $I_1(e_{4m})$, $I_2(e_{4m})$ and $I_3(e_{4m})$ should be eigenvectors for $B$ with real eigenvalue, hence equal to $a$.

Now, denote by $V_z \subset \mathfrak{n} \otimes \mathbb{C}$ the eigenspace for $B$ corresponding to the eigenvalue $z \in \mathbb{C}$, and define 
\[
\mathfrak{m} \coloneqq \mathfrak{n} \cap I_1\mathfrak{n} \cap I_2 \mathfrak{n} \cap I_3 \mathfrak{n}=\text{span}\left<e_{4m},I_1(e_{4m}),I_2(e_{4m}),I_3(e_{4m})\right>^{\perp_g}.
\]
We note that $I_1$, $I_2$, $I_3$ must preserve $W_z \coloneqq (V_{z}+\overline{V_{z}}) \cap \mathfrak{m}$, for all $z \in \text{Spec}(B)$, since $\text{ad}_{e_{4m}}\rvert_{\mathfrak{m}}$ commutes with the restriction of each of the three complex structures on $\mathfrak{m}$. Note that, when $z$ is not real, we have that $W_z$ is the set of real elements of $V_{z}\oplus \overline{V_{z}}$. It follows that $W_z$ inherits a hyperhermitian structure, so that its real dimension is a multiple of four, which implies that the complex dimension of $V_z$ is a multiple of $2$, when $z$ is not real. Up to rescaling $B$ to recover $D$, points (i), (ii) and (iii) of the statement follow. 

Assume now that $B$ satisfies $A=0$, with $m=1$. In particular we have that $\mathfrak{g}$ is four-dimensional, with $\dim [\mathfrak{g},\mathfrak{g}]=1$, so, by \cite[Proposition 3.2]{Bar}, $\mathfrak{g}$ does not admit hypercomplex structures.

Conversely, assume $\mathfrak{g}=\R^{4m-1} \rtimes_D \R$ satisfies $D$ being complex-diagonalizable and requirements (i), (ii) and (iii). It follows that, up to a change of basis $\{e_1,\ldots,e_{4m-1}\}$ of $\R^{4m-1}$, $D$ is of the form
\begin{equation} \label{hyperD}
D=\text{diag}\left(C_{1},C_{2},\ldots,C_{m-1},a,a,a\right),
\end{equation}
where, for $i=1,\ldots,m-1$, $C_i$ is a $4 \times 4$ matrix of the form
\[
C_i= \left(\begin{smallmatrix} 
a & b_i & 0 & 0 \\
-b_i & a & 0 & 0 \\
0 & 0 & a & -b_i \\
0 & 0 & b_i & a
\end{smallmatrix}\right),
\]
for some (possibly vanishing) $b_i \in \R$. Denoting by $e_{4m}$ the generator of the extra $\R$, an explicit LCHK structure on $\mathfrak{g}$ is given by $(I_1,I_2,I_3,g)$, with $g=\sum_{i=1}^{4m} (e^i)^2$ and, with respect to the fixed basis, $I_i=\text{diag}(K_i,\ldots,K_i)$, $i=1,2,3$, with
\[
K_1=\left( \begin{smallmatrix}
	0&-1&0&0\\
	1&0&0&0\\
	0&0&0&-1 \\
	0&0&1&0
\end{smallmatrix} \right), \quad
K_2=\left( \begin{smallmatrix}
	0&0&-1&0\\
	0&0&0&1\\
	1&0&0&0 \\
	0&-1&0&0
\end{smallmatrix} \right), \quad
K_3=\left( \begin{smallmatrix}
	0&0&0&-1\\
	0&0&-1&0\\
	0&1&0&0 \\
	1&0&0&0
\end{smallmatrix} \right).
\]
The three induced Lee forms are all equal to the closed $1$-form $\theta=-(4m-2)ae^{4m}$.

The part of the claim regarding hyperk\"ahler structures easily follows from the fact that the K\"ahler condition on almost abelian Lie algebras corresponds to \eqref{LCK}, with $\lambda=0$. In particular, in this case, we note that, if $D$ is of the form \eqref{hyperD}, with $m_D(0)=3+4h$, we can assume $b_i=0$, $i=m-h,\ldots,m-1$, so that $\text{span}\left<e_{4(m-h)-3},\ldots,e_{4m-1}\right>$ is an abelian subalgebra of dimension $m_D(0)=3+4h$, while its complement, $\text{span}\left<e_1,\ldots,e_{4(m-h-1)},e_{4m}\right>$, is an indecomposable almost abelian Lie algebra.
\end{proof}

The previous theorem can be used to get a more precise list of almost abelian Lie algebras admitting hyperk\"ahler or LCHK structures: in the next proposition, we cover dimensions $4$, $8$ and $12$.

\begin{proposition}
Let $\mathfrak{g}$ be a $4m$-dimensional almost abelian Lie algebra.
\begin{itemize}[leftmargin=3em, itemindent=-1em]
\item
If $m=1$, $\mathfrak{g}$ admits a hyperk\"ahler structure if and only if $\mathfrak{g}=4\R$, while it admits a non-hyperk\"ahler LCHK structure if and only if it is isomorphic to $(f^{14},f^{24},f^{34},0)$. \smallskip
\item
If $m=2$, $\mathfrak{g}$ admits a hyperk\"ahler structure if and only if it is isomorphic to one among \smallskip
\begin{itemize}
	\setlength{\itemindent}{-1em}
	\item[] $8\R=(0,0,0,0,0,0,0,0)$,\smallskip
	\item[] $(f^{28},-f^{18},f^{48},-f^{38},0,0,0,0)$, \smallskip
\end{itemize}
while it admits a non-hyperk\"ahler LCHK structure if and only if it is isomorphic to \smallskip
\begin{itemize}
	\setlength{\itemindent}{-1em}
	\item[] $(f^{18},f^{28},f^{38},f^{48},f^{58},f^{68},f^{78},0)$,\smallskip
	\item[] $(f^{18},f^{28},f^{38},f^{48}+pf^{58},-pf^{48}+f^{58},f^{68}+pf^{78},-pf^{68}+f^{78},0)$, \,$p \neq 0$.
\end{itemize}\smallskip
\item
If $m=3$, $\mathfrak{g}$ admits a hyperk\"ahler structure if and only if it is isomorphic to one among \smallskip
\begin{itemize}
	\setlength{\itemindent}{-1em}
	\item[] $12\R=(0,0,0,0,0,0,0,0,0,0,0,0)$,\smallskip
	\item[] $(f^{2,12},-f^{1,12},f^{4,12},-f^{3,12},0,0,0,0,0,0,0,0)$,\smallskip
	\item[] $(f^{2,12},-f^{1,12},f^{4,12},-f^{3,12},pf^{6,12},-pf^{5,12},pf^{8,12},-pf^{7,12},0,0,0,0)$, \,$p \neq 0$, \smallskip
\end{itemize}
while it admits a non-hyperk\"ahler LCHK structure if and only if it is isomorphic to \smallskip
\begin{itemize}
\setlength{\itemindent}{-1em}
	\item[] $(f^{1,12},f^{2,12},f^{3,12},f^{4,12},f^{5,12},f^{6,12},f^{7,12},f^{8,12},f^{9,12},f^{10,12},f^{11,12},0)$,\smallskip
    \item[] $(f^{1,12},f^{2,12},f^{3,12},f^{4,12},f^{5,12},f^{6,12},f^{7,12},f^{8,12}+pf^{9,12},-pf^{8,12}+f^{9,12},f^{10,12}+pf^{11,12},\\-pf^{10,12}+f^{11,12},0)$, \,$p \neq 0$,\smallskip
    \item[] $(f^{1,12},f^{2,12},f^{3,12},f^{4,12}+pf^{5,12},-pf^{4,12}+f^{5,12},f^{6,12}+pf^{7,12},-pf^{6,12}+f^{7,12},f^{8,12}+qf^{9,12},\\-qf^{8,12}+f^{9,12},f^{10,12}+qf^{11,12},-qf^{10,12}+f^{11,12},0)$, \,$pq \neq 0$.
\end{itemize}
\end{itemize}
\end{proposition}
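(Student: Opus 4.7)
The plan is to apply the preceding theorem and unwind its canonical normal form \eqref{hyperD} for the structure matrix $D \in \mathfrak{gl}_{4m-1}$, reducing the classification to an enumeration of admissible real parameter tuples $(a, b_1, \ldots, b_{m-1})$. Recall from that proof that, in a suitable basis of $\R^{4m-1}$, one may assume $D = \text{diag}(C_1, \ldots, C_{m-1}, a, a, a)$, where each $C_i$ is the $4 \times 4$ block parametrized by $(a, b_i)$, and that the LCHK condition places no further constraint on the real parameters beyond what is already built into this form. Hyperk\"ahler structures correspond exactly to $a = 0$, non-hyperk\"ahler LCHK to $a \neq 0$.

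The next step is to identify the equivalences on such tuples coming from changes of adapted basis: (i) rescaling the transverse generator $e_{4m} \mapsto \mu e_{4m}$, which multiplies the whole tuple $(a, b_1, \ldots, b_{m-1})$ by $\mu$; (ii) permutations among the blocks $C_i$, which permute the $b_i$; and (iii) internal sign-flips $b_i \mapsto -b_i$ obtained by a basis change inside a single block. In the non-hyperk\"ahler case we rescale to set $a = 1$, leaving the multiset $\{|b_1|, \ldots, |b_{m-1}|\}$ as the intrinsic data; in the hyperk\"ahler case, if some $b_i \neq 0$, we rescale so that the first nonvanishing $b_i$ equals $1$. Enumeration is then immediate in low dimension: for $m = 1$ there are no $b_i$, so $D$ is either $0$ or $I_3$, producing $4\R$ and $(f^{14}, f^{24}, f^{34}, 0)$; for $m = 2$ the single parameter $b_1$ either vanishes or does not, giving two cases in each of the hyperk\"ahler and LCHK lists (the nontrivial LCHK case being a one-parameter family in $p = b_1$); for $m = 3$ the two parameters $b_1, b_2$ produce three sub-cases (both zero, exactly one nonzero, both nonzero), matching the three Lie algebras listed in each family, with the two-parameter LCHK family indexed by $(p, q) = (b_1, b_2)$. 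In each subcase, writing the structure equations $df^i = \sum_j D_{ij} f^{j, 4m}$ from the explicit blocks reproduces, up to harmless sign choices of the $b_i$ absorbed by equivalence (iii), the structure equations as they appear in the statement.

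The only non-routine point, and hence the main obstacle, is confirming that distinct normalized tuples give non-isomorphic Lie algebras so that the list is irredundant. This is handled by the observation that, whenever $D \neq 0$, $\mathfrak{g}$ is non-nilpotent with unique nilradical $\R^{4m-1}$, so the conjugacy class of $\text{ad}_{e_{4m}}\rvert_{\mathfrak{n}}$, taken modulo overall real rescaling (which accounts for the freedom in choosing a transverse element up to $\mathfrak{n}$-translation and rescaling), is a well-defined isomorphism invariant of $\mathfrak{g}$. Since the spectrum of $D$ modulo scaling separates the listed normalized tuples, this completes the classification.
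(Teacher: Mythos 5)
Your proof is correct and follows essentially the route the paper intends: the proposition is stated there without proof as a direct consequence of the preceding theorem, and your enumeration of the normal form \eqref{hyperD} by the tuples $(a,b_1,\ldots,b_{m-1})$ modulo rescaling of $e_{4m}$, permutation of the blocks $C_i$ and sign changes of the $b_i$ is exactly the computation left to the reader. Your closing invariance argument (the conjugacy class of $\operatorname{ad}_{e_{4m}}\rvert_{\mathfrak{n}}$ modulo real scaling as an isomorphism invariant of a non-nilpotent almost abelian algebra) is sound and, if anything, slightly more than the ``isomorphic to one of the following'' formulation strictly requires, since the listed families are not claimed to be irredundantly parametrized.
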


\medskip


\end{document}